\theoremstyle{plain}
\newtheorem{theorem}{Theorem}[section]
\newtheorem{lemma}[theorem]{Lemma}
\newtheorem{proposition}[theorem]{Proposition}
\theoremstyle{definition}
\newtheorem{definition}[theorem]{Definition}
\theoremstyle{remark}
\begin{document}

\author{Xiaochen Zhou}
\address{University of Pennsylvania}
\email{zx@sas.upenn.edu}

\title{Recovery of the $C^{\infty}$ Jet from the Boundary Distance Function}

\begin{abstract}
For a compact Riemannian manifold with boundary, we want to find the
metric structure from knowledge of distances between boundary
points. This is called the ``boundary rigidity problem''. If the
boundary is not concave, which means locally not all shortest paths
lie entirely in the boundary, then we are able to find the Taylor
series of the metric tensor ($C^\infty$ jet) at the boundary (see
\cite{Semiglobal},\cite{Boundary}). In this paper we give a new
reconstruction procedure for the $C^{\infty}$ jet at non-concave
points on the boundary using the localized boundary distance
function.
\par A closely related problem is the ``lens rigidity problem'', which asks whether
the lens data determine metric structure uniquely. Lens data include
information of boundary distance function, the lengths of all
geodesics, and the locations and directions where geodesics hit the
boundary. We give the first examples that show that lens data cannot
uniquely determine the $C^{\infty}$ jet. The example include two
manifolds with the same boundary and the same lens data, but
different $C^\infty$ jets. With some additional careful work, we can
find examples with different $C^1$ jets, which means the boundaries
in the two lens-equivalent manifolds have different second
fundamental forms.
\end{abstract}

\maketitle

\section{Introduction}
Let $(M,g)$ be a compact Riemannian manifold with smooth boundary,
and let
\[\tau:M\times M\rightarrow\mathbb{R}\]
be the distance function given by $g$. The boundary rigidity problem
asks whether we can recover $g$ from the information of
$\tau|_{\partial M}$. That is, whether we can uniquely determine the
Riemannian metric of $M$, knowing the distances from boundary points
to boundary points. Obviously if we pullback the metric $g$ by a
diffeomorphism $f:M\rightarrow M$ that fixes every boundary point,
the resulting metric $f^\ast g$ gives the same boundary distance
function as before, but it is different from the original metric
$g$. So the natural question is, whether this is the only
obstruction to unique determination. If the answer is positive for
$(M,g)$, then it is called \textit{boundary rigid}.

\par One would like to know whether a given manifold with boundary
is boundary rigid. If in some cases we have affirmative answers, we
further want to have a procedure to recover the interior metric
structure from the information of boundary (``chordal'') distances.
The $C^\infty$ \textit{jet} at a point of a Riemannian manifold is,
roughly speaking, the Taylor series of the metric tensor at the
point. Therefore to recover the $C^\infty$ jet at boundary points is
the first step of the recovery of the entire interior metric
structure.

\par In arguments about the boundary rigidity problem, often one needs to extend
$(M,g)$ beyond its boundary, and here people care about the
smoothness of the extension. The extension of $g$ is smooth if and
only if the $C^\infty$ jets computed from both sides of the boundary
agree.

\par There are results on the boundary determination of $C^\infty$ jets.
Michel \cite{MichelC2} proved that boundary distances uniquely
determine the Taylor series of $g$ up to order 2, and in
\cite{MichelDim2} he proved the same result without order
limitations but with $\text{dim}(M)=2$, both with convex boundaries.
Here convexity roughly means that the distance of two sufficiently
close boundary points should be realized by a geodesic whose
interior does not intersect the boundary. In \cite{Semiglobal} there
is an elementary proof that the $C^\infty$ jet is uniquely
determined by the boundary distance function if the boundary is
convex. However the results above are not constructive. In
\cite{Boundary}, Uhlmann and Wang applied the result of
\cite{KeyIdentity} and used a suitably chosen reference metric, and
gave a recovery procedure of $C^\infty$ jet on the boundary from
localized boundary distance function. Here ``localized'' means we do
not need to know $\tau$ for all pairs of points in $\partial M$, but
we only should know $\tau$ restricted to an open neighborhood of the
diagonal of $\partial M\times\partial M$, that is, the distance
between close enough pairs. It should be noted that the arguments in
\cite{Semiglobal} and \cite{Boundary} also apply to non-concave
boundary (see Definition \ref{nonconcave}) without much
modification.

\par Up to now, the only result for concave boundary is
\cite{Local}, Theorem 1. The statement is, if a geodesic segment
$\gamma$ is tangent to the boundary at one end $p$, and the other
end $q$ lies on the boundary, then under some generic no conjugate
points condition, we can recover the $C^\infty$ jet at $p$ based on
the lengths of geodesic segments in a neighborhood of $\gamma$. The
argument in section \ref{recovery} of this paper is similar to
\cite{Local}.

\par In the first part (section \ref{pre} and \ref{recovery}) of this paper, we give the same results as in \cite{Boundary},
that is, a procedure to recover the $C^\infty$ jet at boundary
points, but our argument is relatively elementary. We also directly
adopt the weaker assumption that the boundary is non-concave, as
opposed to ``convex'' as in previous results.

\par In the second part (section \ref{examplec2} and \ref{examplec1}) of this paper, we give the first known
example that shows the lens data do not always determine the
boundary $C^\infty$ jet. Here ``lens data'' include the information
of $\tau|_{\partial M}$ and the lengths of all maximal geodesics,
together with the locations and the directions whenever they hit the
boundary (see Definition $\ref{lens}$, or \cite{Local} section 1 for
detail). So the results in \cite{Semiglobal}, \cite{Boundary}, and
the first part of this paper show that lens data uniquely determine
$C^\infty$ jet near non-concave points. Meanwhile, the results in
\cite{Local} should imply: We can uniquely recover $C^\infty$ jet
near ``generic'' concave points, from the lens data of geodesics
with bounded length, which are ``almost'' tangent to the boundary.
In the example in section \ref{examplec2}, the boundary is totally
geodesic, so nearby geodesics have unbounded length, although each
of them hits the boundary in finite time. In the example in section
\ref{examplec1}, the boundary is strictly concave, but every
complete geodesic tangent to the boundary has infinite length.
Therefore, the examples in this paper fall in the gap between
non-concave results (Theorem \ref{main} of this paper,
\cite{Semiglobal}, and \cite{Boundary}) and the concave result
\cite{Local}.
\par The author would like to thank Christopher Croke
for valuable advices and a careful reading of earlier drafts, and
Gunther Uhlmann for helpful comments.

\section{Preliminaries}\label{pre}
Throughout this paper we let $(M,g)$ be a compact $n$-dimensional
Riemannian manifold with smooth boundary $\partial M$. Let $\tau$ be
the distance function, and let $\rho=\tau^2$. We further introduce
the notation $\tau_x(\cdot)=\tau(\cdot,x)$, and
$\rho_x(\cdot)=\rho(\cdot,x)$. Notice that the distance might not be
realized by a geodesic, and a curve realizing it can have
non-geodesic parts in the boundary.

\par Let
\begin{equation}\label{mu}
\mu:\partial M\times\partial M\rightarrow\mathbb{R}
\end{equation}
be the distance function on the Riemannian manifold $(\partial
M,g|_{\partial M})$. Note that $\mu$ is not $\tau|_{\partial M}$
($\mu\geq\tau$ in general) although they may agree on some subset.
Near ``non-concave'' points, $\tau|_{\partial M}$ contains more
information than $\mu$.

\begin{definition}[Concave and Non-Concave points]\label{nonconcave}
 Let $x\in\partial M$. We say $\partial M$ is \textit{concave} at $x$ if the second fundamental form of $\partial M$ is negative
semi-definite at $x$, with respect to $\nu_x$ the inward-pointing
unit normal vector. We call $\partial M$ \textit{non-concave} at $x$
if it is not concave at $x$, that is, the second fundamental form
has at least one positive eigenvalue.
\end{definition}
In order to detect non-concave points from $\tau|_{\partial M}$, we
state the following elementary proposition without proof.
\begin{proposition}\label{Convex}
Let $x\in\partial M$. If $\partial M$ is concave in an open
neighborhood of $x$, then there exists $\varepsilon>0$ such that
whenever $p,q\in\partial M$ satisfy $\mu(x,p)<\varepsilon$ and
$\mu(x,q)<\varepsilon$, we have $\tau(p,q)=\mu(p,q)$. That means,
for a pair of points close enough to $x$, the shortest path between
them is along the boundary.
\end{proposition}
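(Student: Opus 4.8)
The plan is to work in a boundary normal coordinate neighborhood of $x$ and exploit the sign of the second fundamental form directly. First I would pick $\varepsilon$ small enough that the set $U = \{y \in M : \mu(x,y) < \varepsilon\}$ (more precisely, a collar of the portion of $\partial M$ within distance $\varepsilon$ of $x$) is covered by Fermi coordinates $(y', t)$, where $y' \in \partial M$ and $t \geq 0$ is the distance to the boundary along the inward normal geodesics. In these coordinates the metric has the form $g = dt^2 + g_{ij}(y',t)\,dy^i dy^j$, and the hypothesis that $\partial M$ is concave on a neighborhood of $x$ says precisely that $\partial_t g_{ij}(y',0)$ is positive semi-definite there (the second fundamental form with respect to the inward normal is $\mathrm{II} = -\tfrac12 \partial_t g_{ij}|_{t=0}$, so concave means $-\tfrac12\partial_t g_{ij}|_{t=0} \preceq 0$).

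Next I would show that under this sign condition, $g_{ij}(y',t) \succeq g_{ij}(y',0)$ as quadratic forms for all small $t \geq 0$ — i.e.\ moving into the interior only lengthens tangential vectors, at least to leading order, and after shrinking $\varepsilon$ we can make this hold exactly by controlling the remainder. Granting this, take any curve $\sigma$ in $M$ joining two boundary points $p, q \in U$. Write $\sigma(s) = (\sigma'(s), t(s))$. Then
\begin{equation}\label{lengthbound}
|\dot\sigma(s)|_g^2 = \dot t(s)^2 + g_{ij}(\sigma'(s), t(s))\,\dot\sigma^i \dot\sigma^j \geq g_{ij}(\sigma'(s), 0)\,\dot\sigma^i \dot\sigma^j = |\dot\pi(s)|^2_{g|_{\partial M}},
\end{equation}
where $\pi(s) = (\sigma'(s), 0)$ is the projection of $\sigma$ onto $\partial M$. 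Integrating, $\mathrm{length}_g(\sigma) \geq \mathrm{length}_{g|_{\partial M}}(\pi(\sigma)) \geq \mu(p,q)$. Taking the infimum over all $\sigma$ gives $\tau(p,q) \geq \mu(p,q)$; the reverse inequality $\tau(p,q) \leq \mu(p,q)$ is automatic since a boundary path is in particular a path in $M$. Hence $\tau(p,q) = \mu(p,q)$.

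The main obstacle is the global-versus-local gap in the monotonicity step: the sign condition on $\partial_t g_{ij}$ is only assumed on a neighborhood of $x$, so I must choose $\varepsilon$ so that any curve contributing to $\tau(p,q)$ (which we may assume nearly length-minimizing, hence of length $< 2\varepsilon$) stays inside the region where the estimate is valid, and also so that the Taylor expansion $g_{ij}(y',t) = g_{ij}(y',0) + t\,\partial_t g_{ij}(y',0) + O(t^2)$ has its $O(t^2)$ error dominated by the (possibly degenerate) linear term — which fails if $\partial_t g_{ij}(y',0)$ has a zero eigenvalue in some direction. To handle the semi-definite (not definite) case cleanly, rather than comparing $g_{ij}(y',t)$ to $g_{ij}(y',0)$ I would instead argue that the minimizer of $\tau(p,q)$ must actually lie in $\partial M$: any competitor dipping into $t > 0$ can be pushed to the boundary without increasing length, using \eqref{lengthbound} together with the fact that the curve's endpoints are on $\partial M$. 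This converts the problem into comparing lengths of the original curve and its projection, sidestepping the need for a strict inequality and only requiring the semi-definite bound \eqref{lengthbound}, which holds as soon as $\varepsilon$ is small enough that $\partial_t g_{ij}(y',0) + t\,(\text{remainder}) \succeq 0$ on the relevant region — a condition that, unlike strict definiteness, does survive under the semi-definite hypothesis after choosing $\varepsilon$ appropriately since the remainder is $O(t)$ uniformly while the leading term is $\succeq 0$.
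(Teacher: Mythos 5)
The paper states this proposition without proof, so there is nothing to compare against; judged on its own, your argument has a genuine gap at its central step. The inequality $g_{ij}(y',t)\succeq g_{ij}(y',0)$ for small $t>0$ does \emph{not} follow from $\partial_t g_{ij}(y',0)\succeq 0$ when that form is merely semi-definite. If $v$ lies in the kernel of $\partial_t g_{ij}(y',0)$, then $g_{ij}(y',t)v^iv^j=g_{ij}(y',0)v^iv^j+O(t^2)$, and the $O(t^2)$ term is governed by $\partial_t^2 g_{ij}|_{t=0}$, which can be negative. A concrete counterexample is a totally geodesic boundary (allowed by the paper's definition of concave, and exactly the situation in its Section 5 examples): take $g=dt^2+f(t)^2\,d\theta^2$ with $f(0)=1$, $f'(0)=0$, $f''(0)<0$. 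Here $\mathrm{II}\equiv 0$, yet $g_{\theta\theta}(t)<g_{\theta\theta}(0)$ for all small $t>0$, so projecting a curve to the boundary strictly \emph{increases} its length and your display \eqref{lengthbound} fails. Your proposed repair rests on the assertion that $\partial_t g_{ij}(y',0)+t\,(\text{remainder})\succeq 0$ for small $t$ because ``the leading term is $\succeq 0$''; this is false linear algebra --- a singular positive semi-definite matrix plus an arbitrarily small symmetric perturbation need not remain positive semi-definite. So the workaround is circular: it reasserts \eqref{lengthbound} in the exact degenerate case where it breaks.

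The statement is still true, but the correct mechanism is not pointwise monotonicity of the tangential metric; it is a balance of orders. Any competitor from $p$ to $q$ that reaches depth $h$ pays a normal-direction cost of order $h$ (from the $\dot t^2$ term), while the tangential saving available at depth $h$ is only $O(h^2)\cdot\mu(p,q)$ once $\mathrm{II}\preceq 0$ kills the first-order term. A near-minimizer has length less than $2\varepsilon$ and hence depth less than $2\varepsilon$, and for $\varepsilon$ small the linear cost beats the quadratic saving; making this precise requires something like the estimate $\sqrt{a^2+b^2}\geq \alpha a+\beta b$ with $\alpha^2+\beta^2\leq 1$ and a choice of $\alpha,\beta$ depending on the depth, or alternatively a second-variation/ODE comparison argument for the depth function along an interior geodesic arc of a minimizer. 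Your projection idea works as written only under the \emph{strictly} concave hypothesis, where $\partial_t g_{ij}(y',0)\succeq\lambda\,g_{ij}(y',0)$ with $\lambda>0$ absorbs the $O(t^2)$ remainder.
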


\par Since $(M,g)$ is extendable, we fix a compact $n$-dimensional Riemannian manifold without boundary
$(\tilde{M},\tilde{g})$, such that $\partial M$ is an
$(n-1)$-dimensional submanifold, the interior of $M$ is a connected
component of $\tilde{M}-\partial M$, and $g$ is the restriction of
$\tilde{g}$.

\par Next we define \textit{boundary normal coordinates} of $M$ near $\partial
M$. Let $(x_1,\ldots,x_{n-1})$ be a coordinate chart on the manifold
$\partial M$. For $p\in M$ close enough to $\partial M$, there is a
unique point $y=(y_1,\ldots,y_{n-1})\in\partial M$ that is closest
to $p$. We then give $p$ the coordinates $(y_1,\ldots,y_{n-1},y_n)$
where $y_n$ is defined to be the distance from $p$ to $y$. In such
coordinates, $g_{in}=\delta_{in}$, and the curves
$c(t)=(y_1,\ldots,y_{n-1},t)$ are geodesics perpendicular to
$\partial M$ at $t=0$.

\par Knowing the $C^\infty$ \textit{jet} of $g$ on $\partial M$ is equivalent to,
with respect to a given boundary normal coordinates, knowing the
derivatives
\[
\left.\frac{\partial^k}{\partial x_n^k} g_{ij}\right|_{\partial M}
\]
for all $k\geq 0$ and indices $i,j$, where
$g_{ij}=g\left(\frac{\partial}{\partial
x_i},\frac{\partial}{\partial x_j}\right)$. Clearly if we know the
jet with respect to one choice of boundary normal coordinates, we
are able to find the jet with respect to every choice of boundary
normal coordinates, knowing the coordinate change on the boundary.
For each integer $l\geq 0$, knowledge of $C^l$ jet means knowledge
of all the $\frac{\partial^k}{\partial x_n^k} g_{ij}$ with $k\leq
l$. In this paper we find the jet only under boundary normal
coordinates, and see \cite{Semiglobal} Theorem 2.1 for the precise
statement for general coordinates.

\par The key identity in the jet recovery procedure is the \textit{Eikonal
equation},
\[
|\nabla\tau_p|=1,\ \ p\in M,
\]
wherever the function $\tau_p$ is smooth. In coordinate charts the
Eikonal equation is
\[
g^{ij}\frac{\partial \tau_p}{\partial x_i}\frac{\partial
\tau_p}{\partial x_j}=1.
\]
Here we adopt Einstein summation convention, where $i,j$ ranges from
$1$ to $n$, and matrix $(g^{ij})$ is the inverse of $(g_{ij})$. In
boundary normal coordinates, this becomes
\begin{equation}
\label{Eikonal} g^{\alpha\beta}\frac{\partial \tau_p}{\partial
x_\alpha}\frac{\partial \tau_p}{\partial
x_\beta}+\left(\frac{\partial \tau_p}{\partial x_n}\right)^2=1,
\end{equation}
where $\alpha$ and $\beta$ range from $1$ to $(n-1)$.

\par We will use the convention that $i,j$ range from
$1$ to $n$, and $\alpha,\beta$ range from $1$ to $(n-1)$. We assume
we are always in boundary normal coordinates near $\partial M$. We
will write $\partial_{x_i}$ for $\frac{\partial}{\partial x_i}$, and
$\partial _{x_ix_j}$ for $\frac{\partial^2}{\partial x_i\partial
x_j}$, and so on. The reader should view the function $\tau$ as
$\tau(x,y)$ and
\[\tau(x_1,\ldots,x_n,y_1,\ldots,y_n),\]
so that formulas like $\partial_{x_i}\tau(p,q)$ and
$\partial_{y_i}\tau(p,q)$ will make sense. We treat $\rho=\tau^2$
similarly.

\section{Recovery of $C^\infty$ jet}\label{recovery}
We have the following lemmas.

\begin{lemma}\label{rho}
Let $V_\varepsilon\subset M\times M$ be the set of pairs $(x,y)$
satisfying the following properties: $\tau(x,y)$ is realized by a
geodesic in $M$, and $\tau(x,y)\leq\varepsilon$.
\par Then there exists an $\varepsilon>0$ such that $\rho$ is a smooth
function on $V_\varepsilon$.
\end{lemma}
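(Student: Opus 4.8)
The plan is to show that near the diagonal, the squared-distance function $\rho(x,y)=\tau(x,y)^2$ can be realized in terms of the exponential map, which is smooth, and that the obstruction to smoothness of $\tau$ itself — namely the vanishing of $\tau$ on the diagonal and the square-root singularity there — disappears once we pass to $\rho$. First I would work in the extended manifold $(\tilde M,\tilde g)$ fixed in the preliminaries, so that $\partial M$ causes no trouble and I may freely use that $\tilde M$ is a manifold without boundary; a geodesic in $M$ realizing $\tau(x,y)$ is then also a geodesic of $\tilde M$. Recall the standard fact that on a manifold without boundary there is an open neighborhood $\mathcal{U}$ of the diagonal in $\tilde M\times\tilde M$ on which the map $(x,v)\mapsto(x,\exp_x v)$ is a diffeomorphism from a neighborhood $\mathcal{O}$ of the zero section of $T\tilde M$ onto $\mathcal{U}$; denote its inverse by $(x,y)\mapsto(x,E(x,y))$, so that $E$ is smooth and $\exp_x E(x,y)=y$.

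The key step is then the identity $\rho(x,y)=\tilde g_x\big(E(x,y),E(x,y)\big)$ valid for $(x,y)$ in that neighborhood of the diagonal, since along the unique short geodesic from $x$ to $y$ the length is $|E(x,y)|_{\tilde g}$ and, by compactness of $M$, for $\varepsilon$ small enough every pair $(x,y)\in V_\varepsilon$ with $\tau(x,y)\le\varepsilon$ lands in this neighborhood and the distance-realizing geodesic in $M$ is \emph{the} short geodesic of $\tilde M$ (uniqueness forces them to coincide; here is where I use that $(x,y)\in V_\varepsilon$, so that the distance is realized by an honest geodesic, not a boundary-hugging curve). The right-hand side $\tilde g_x(E(x,y),E(x,y))$ is a composition of the smooth metric coefficients with the smooth map $E$, hence smooth on $\mathcal{U}$; restricting to $V_\varepsilon\subset\mathcal{U}$ gives the claim. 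Concretely one can also write, in any coordinates, $\rho(x,y)=g_{ij}(x)\,E^i(x,y)E^j(x,y)$ and read off smoothness directly.

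The main obstacle — really the only point needing care — is the passage from "the distance in $M$ is realized by a geodesic of $M$" to "that geodesic is the short $\tilde M$-geodesic joining the endpoints, so $\tau=|E|_{\tilde g}$". For this I would argue: choose $\varepsilon$ small enough (using compactness of $M$ and the injectivity-radius-type bound from the diffeomorphism above, applied on the compact set $M$) that for every $x\in M$ the ball of $\tilde g$-radius $\varepsilon$ around $x$ lies in the normal neighborhood where $\exp_x$ is a diffeomorphism onto its image. Then if $(x,y)\in V_\varepsilon$, any $M$-geodesic of length $\le\varepsilon$ from $x$ to $y$ is a $\tilde g$-geodesic of length $\le\varepsilon$, hence must be the radial geodesic $t\mapsto\exp_x(tE(x,y))$; in particular its length equals $|E(x,y)|_{\tilde g}$, and since this geodesic realizes $\tau$ we get $\tau(x,y)=|E(x,y)|_{\tilde g}$ and therefore $\rho(x,y)=\tilde g_x(E(x,y),E(x,y))$ on all of $V_\varepsilon$, completing the proof.
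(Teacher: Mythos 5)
Your proposal is correct and follows essentially the same route the paper indicates: pass to the closed extension $(\tilde M,\tilde g)$, note that a distance-realizing $M$-geodesic of length $\le\varepsilon$ is the unique short $\tilde M$-geodesic so that $\rho$ agrees with $\tilde\rho=|E|_{\tilde g}^2$ near the diagonal, and read off smoothness from the inverse exponential map. You have simply supplied the standard details (uniform normal neighborhoods, the identity $\rho(x,y)=\tilde g_x(E(x,y),E(x,y))$) that the paper leaves as a sketch, including the one point that genuinely needs care — excluding boundary-hugging minimizers via the hypothesis defining $V_\varepsilon$.
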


The lemma is easy to prove if $M$ has no boundary. But when $M$ has
a boundary, we may first prove the property for the extension
$(\tilde{M},\tilde{g})$ with its corresponding $\tilde{\rho}$, and
use the fact that $\rho|_{V_\varepsilon}$ is the same as
$\tilde{\rho}|_{V_\varepsilon}$. Recall that a function being smooth
in (a subset of) a manifold with boundary (and possibly corner)
means the manifold together with the function can be extended into a
bigger one without boundary such that the function is still smooth.
\par Notice that we cannot replace $\rho$ in the last lemma with
$\tau$, because $\tau$ is not smooth where $x=y$. Smoothness is the
reason why we use distance squared rather than distance itself.

\begin{lemma}
\label{crho} Let $c:(-\varepsilon,+\varepsilon)\rightarrow M$ be a
smooth curve in $M$, which may intersect $\partial M$. If for each
$t$ the distance between $c(t)$ and $c(0)$ is realized by a
minimizing geodesic of $M$, then we have
\begin{equation}\label{crhoeq}
2|c'(0)|^2=\left.\frac{\partial^2}{\partial
t^2}\right|_{t=0}\rho(c(t),c(0)).
\end{equation}
\end{lemma}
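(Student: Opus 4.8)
The plan is to exploit the fact, provided by Lemma \ref{rho}, that $\rho$ is smooth near the diagonal, so that the composition $t\mapsto\rho(c(t),c(0))$ is a genuine smooth function of $t$ to which we may apply Taylor expansion at $t=0$. The essential input is a first- and second-order analysis of $\rho$ along a geodesic, which is elementary because along a minimizing geodesic $\rho(x,c(0))$ is simply the square of the arclength from $c(0)$. Thus I would first record the two infinitesimal facts we need: if $x_0=c(0)$, then $\rho_{x_0}(x_0)=0$, the gradient $\nabla\rho_{x_0}$ vanishes at $x_0$, and the Hessian of $\rho_{x_0}$ at $x_0$ equals $2g$. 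The first two are immediate since $\rho_{x_0}\ge 0$ achieves its minimum $0$ at $x_0$; the Hessian statement is the standard computation that $\rho_{x_0}(\exp_{x_0}(v))=|v|^2$ for small $v$, whose second differential at $v=0$ is the metric.

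Next I would differentiate $f(t):=\rho(c(t),c(0))$ twice using the chain rule. Writing everything in coordinates with $\rho_{x_0}(x)=\rho(x,c(0))$,
\[
f'(t)=\partial_{x_i}\rho_{x_0}(c(t))\,\dot c^{\,i}(t),
\]
\[
f''(t)=\partial_{x_ix_j}\rho_{x_0}(c(t))\,\dot c^{\,i}(t)\dot c^{\,j}(t)+\partial_{x_i}\rho_{x_0}(c(t))\,\ddot c^{\,i}(t).
\]
Evaluating at $t=0$: the second term in $f''(0)$ drops out because $\partial_{x_i}\rho_{x_0}$ vanishes at $x_0=c(0)$ (the gradient of $\rho_{x_0}$ at its minimum is zero), and the first term is the coordinate Hessian of $\rho_{x_0}$ at $x_0$ contracted with $c'(0)\otimes c'(0)$. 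Since at the minimum point the coordinate Hessian of $\rho_{x_0}$ and its covariant Hessian agree, and the latter is $2g$, we get $f''(0)=2g_{ij}(x_0)\dot c^{\,i}(0)\dot c^{\,j}(0)=2|c'(0)|^2$, which is exactly \eqref{crhoeq}.

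The one point requiring care — and the main obstacle — is justifying that the relevant derivatives of $\rho$ exist and have the claimed values when $c$ meets $\partial M$. This is handled by passing to the extension $(\tilde M,\tilde g)$: by Lemma \ref{rho} applied to $\tilde M$ (and the remark that $\rho$ agrees with $\tilde\rho$ on $V_\varepsilon$), $\rho$ extends to a smooth function of $(x,y)$ on a neighborhood of $(x_0,x_0)$ in $\tilde M\times\tilde M$, so $f$ is smooth at $t=0$ regardless of whether $c(t)$ lies in $\partial M$, and the identities for $\nabla\rho_{x_0}(x_0)$ and $\mathrm{Hess}\,\rho_{x_0}(x_0)=2\tilde g=2g$ may be computed in $\tilde M$ using $\exp^{\tilde g}$. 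The hypothesis that each $\tau(c(t),c(0))$ is realized by a minimizing geodesic of $M$ guarantees $(c(t),c(0))\in V_\varepsilon$ for $t$ small, so that $\rho(c(t),c(0))=\tilde\rho(c(t),c(0))$ and the extended computation genuinely computes the quantity in \eqref{crhoeq}. With that reduction in place the remaining steps are the routine chain-rule differentiation above.
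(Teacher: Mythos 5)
Your proof is correct and follows essentially the same route as the paper: both arguments hinge on the fact that $(c(0),c(0))$ is a critical point of $\rho$, so the acceleration term drops out and the second derivative reduces to the Hessian of $\rho$ there, computed via the extension $\tilde M$ to handle smoothness at the boundary. The only cosmetic difference is that the paper concludes ``the answer depends only on $c'(0)$, so assume $c$ is a geodesic,'' whereas you evaluate the Hessian directly as $2g$ via $\rho_{x_0}(\exp_{x_0}v)=|v|^2$; these are the same computation.
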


\begin{proof}
If $c$ is a geodesic the statement is clearly true. If $c'(0)=0$ the
statement is also easy to prove.
\par Otherwise, we may think of
$c'(t)$ as coming from a vector field $X$ in a neighborhood of
$c(0)\in M$. This will give rise to a vector field $\tilde{X}=(X,0)$
in $M\times M$. Then we look at the right side,
\begin{eqnarray*}
\left.\frac{\partial^2}{\partial
t^2}\right|_{t=0}\rho(c(t),c(0))&=&\tilde{X}(\tilde{X}\rho)\\
&=&\text{Hess}\rho(\tilde{X},\tilde{X})+\left(\nabla_{\tilde{X}}\tilde{X}\right)\rho\\
&=&\text{Hess}\rho(\tilde{X},\tilde{X}),
\end{eqnarray*}
where all expressions are evaluated at $(c(0),c(0))$, a critical
point of $\rho$. However, $\text{Hess}\rho(\tilde{X},\tilde{X})$
only depends on $\tilde{X}$ at the point, which is $(c'(0),0)$, so
the right side of equation (\ref{crhoeq}) only depends on $c'(0)$.
This means we might as well assume $c$ is a geodesic.
\end{proof}

Now we are ready to recover the jet from the localized boundary
distance function. We present the recovery procedure in four steps,
$i.e.$ Proposition \ref{c0jet}, \ref{c1jet}, \ref{c2jet}, and
\ref{ckjet}.

\begin{proposition}\label{c0jet}
We can recover the $C^0$ jet from the localized boundary distance
function.
\end{proposition}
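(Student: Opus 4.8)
The plan is to recognize that, in boundary normal coordinates, the $C^0$ jet at a point $p\in\partial M$ is exactly the first fundamental form at $p$: since $g_{in}=\delta_{in}$ identically, recovering the $C^0$ jet at $p$ is the same as recovering the numbers $g_{\alpha\beta}(p)=g(\partial_{x_\alpha},\partial_{x_\beta})|_p$ for $1\le\alpha,\beta\le n-1$. By polarization this reduces further to recovering $|v|^2$ for every $v\in T_p\partial M$, via
\[
g_{\alpha\alpha}(p)=|\partial_{x_\alpha}|^2,\qquad
g_{\alpha\beta}(p)=\tfrac12\left(|\partial_{x_\alpha}+\partial_{x_\beta}|^2-|\partial_{x_\alpha}|^2-|\partial_{x_\beta}|^2\right),
\]
all norms and vectors taken at $p$.

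First I would fix $p$ and $v$ and let $c:(-\varepsilon,\varepsilon)\to\partial M$ be the curve which in the boundary coordinate chart is the straight segment through $p$ with velocity $v$; since $c$ stays on $\partial M$, the numbers $\tau(c(t),c(0))$ for small $|t|$ are values of the \emph{localized} boundary distance function. The core claim is
\[
|v|^2=\lim_{t\to 0}\frac{\tau(c(t),c(0))^2}{t^2}.
\]
Granting this, applying it to the $n-1$ coordinate directions and to the directions $\partial_{x_\alpha}+\partial_{x_\beta}$ and using the polarization formulas above recovers every $g_{\alpha\beta}(p)$, hence the full $C^0$ jet, from $\tau|_{\partial M}$ alone. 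Note that no condition on the second fundamental form of $\partial M$ is used here; non-concavity will only enter in the later steps.

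To prove the claim I would use a two-sided squeeze. Since $\tau\le\mu$ we have $\tau(c(t),c(0))^2\le\mu(c(t),c(0))^2$; and since $(\partial M,g|_{\partial M})$ is a closed manifold, for $|t|$ small $\mu(c(t),c(0))$ is realized by a minimizing geodesic of $\partial M$, so Lemma \ref{rho} applied to $\partial M$ makes $t\mapsto\mu(c(t),c(0))^2$ smooth near $0$, and Lemma \ref{crho} applied to $\partial M$ gives it second derivative $2|v|^2$ at $0$; as it also vanishes at $0$ we get $\mu(c(t),c(0))^2=|v|^2t^2+O(t^3)$. On the other side, pass to the closed extension $(\tilde{M},\tilde{g})$: distances can only decrease, so $\tilde{\tau}\le\tau$, and the same argument applied in $\tilde{M}$ (whose boundary is empty, so Lemmas \ref{rho} and \ref{crho} apply with no subtlety) gives $\tilde{\tau}(c(t),c(0))^2=|v|^2t^2+O(t^3)$. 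Hence
\[
|v|^2t^2+O(t^3)=\tilde{\tau}(c(t),c(0))^2\le\tau(c(t),c(0))^2\le\mu(c(t),c(0))^2=|v|^2t^2+O(t^3),
\]
and dividing by $t^2$ and letting $t\to 0$ proves the claim.

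The step that needs the most care is exactly this claim, and the reason is that $\tau(c(t),c(0))$ need \emph{not} be realized by a geodesic of $M$: between two nearby boundary points the minimizer may run partly along $\partial M$, so Lemma \ref{crho} cannot simply be quoted for $M$ with the curve $c$. The squeeze above is designed to circumvent this, since both bounding quantities --- the intrinsic distance $\mu$ in $\partial M$ and the ambient distance $\tilde{\tau}$ in $\tilde{M}$ --- are governed by genuine minimizing geodesics and share the same second-order expansion at $t=0$. Working with the limit of $\tau^2/t^2$ rather than with a second derivative of $t\mapsto\tau(c(t),c(0))^2$ also sidesteps the question of whether that function is smooth at $0$.
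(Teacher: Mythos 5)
Your proposal is correct. It proves the same thing the paper does --- that the $C^0$ jet in boundary normal coordinates is just the first fundamental form $g_{\alpha\beta}|_{\partial M}$, which is determined by chordal distances between nearby boundary points --- but by a genuinely different route. The paper's (very terse) argument goes through curve lengths: from $\tau|_D$ one computes the length of any smooth curve in $\partial M$ (as a limit of sums of chordal distances over refining partitions), and lengths of curves determine the metric tensor. You instead reduce to recovering $|v|^2$ for $v\in T_p\partial M$ by polarization and establish $|v|^2=\lim_{t\to 0}\tau(c(t),c(0))^2/t^2$ by squeezing $\tau$ between the distance $\tilde\tau$ of the closed extension and the intrinsic boundary distance $\mu$, applying Lemmas \ref{rho} and \ref{crho} to each of the two bounding manifolds. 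What your version buys is an explicit treatment of the one real subtlety here, which the paper's one-line proof glosses over: a minimizer between two nearby boundary points need not be an $M$-geodesic (it can run along $\partial M$), so Lemma \ref{crho} cannot be quoted directly for $\tau$ on $M$; the squeeze circumvents this cleanly, and it also avoids having to justify smoothness of $t\mapsto\tau(c(t),c(0))^2$ at $t=0$. The paper's route is shorter and needs neither lemma, at the cost of leaving the ``length from distances'' step to the reader. Both are valid; yours is the more self-contained within the framework the paper has already set up.
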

This is easy because $C^0$ jet is simply $g_{ij}|_{\partial M}$ the
Riemannian metric tensor. From the localized boundary distance
function, we are able to compute the length of any smooth curve in
$\partial M$. The curve lengths will tell us the metric tensor.

\par We start the recovery procedure for higher order jets. The
idea underlying the proofs of the following propositions
(Proposition \ref{c1jet}, \ref{c2jet}, and \ref{ckjet}) is derived
from \cite{Local}, section 3.

\begin{proposition}
\label{c1jet} If $\partial M$ is non-concave at $y$, then we can
recover the $C^1$ jet near $y\in\partial M$ from the localized
boundary distance function, with respect to a given boundary normal
coordinates.
\end{proposition}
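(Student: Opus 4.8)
The plan is to exploit the Eikonal equation (\ref{Eikonal}) together with the fact that at a non-concave point one can find boundary distances that are realized by genuine geodesics entering the interior. First I would fix a non-concave point $y$ and a direction $v$ in the tangent space of $\partial M$ along which the second fundamental form is positive. For $p,q$ on $\partial M$ close to $y$ and suitably placed (roughly, on opposite sides along an integral curve of such a direction field), Proposition \ref{Convex}'s failure ensures that $\tau(p,q)$ is \emph{not} the intrinsic boundary distance, and in fact is realized by a minimizing geodesic of $M$ whose interior does not touch $\partial M$. By Lemma \ref{rho}, $\rho = \tau^2$ is then smooth on such pairs, so all tangential derivatives $\partial_{x_\alpha}\tau_p$, $\partial_{x_\alpha x_\beta}\tau_p$, etc., at points of $\partial M$ are computable from the localized boundary distance function (they are intrinsic to $\partial M\times\partial M$). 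The one derivative we cannot read off directly is the normal derivative $\partial_{x_n}\tau_p$.

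The second step is to recover $\partial_{x_n}\tau_p$ at boundary points from the Eikonal equation. In boundary normal coordinates $g_{in}=\delta_{in}$, so (\ref{Eikonal}) reads $g^{\alpha\beta}\,\partial_{x_\alpha}\tau_p\,\partial_{x_\beta}\tau_p + (\partial_{x_n}\tau_p)^2 = 1$. Since the $C^0$ jet is known (Proposition \ref{c0jet}), the matrix $g^{\alpha\beta}$ on $\partial M$ is known; and the tangential first derivatives are known from step one; hence $(\partial_{x_n}\tau_p)^2$, and therefore $|\partial_{x_n}\tau_p|$, is determined. The sign is fixed because the geodesic from $p$ enters $M$, so $\partial_{x_n}\tau_p|_{\partial M} \le 0$ near $p$ (the distance to $p$ decreases as one moves inward), and one checks it is strictly negative in the relevant range. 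Thus $\partial_{x_n}\tau_p$ is recovered as a function on a boundary neighborhood, for each such $p$.

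The third step is to differentiate this recovered identity tangentially and in $p$ to extract $\partial_{x_n} g_{\alpha\beta}|_{\partial M}$, which is the content of the $C^1$ jet. Differentiating the Eikonal equation once in $x_n$ gives a relation
\begin{equation*}
\partial_{x_n} g^{\alpha\beta}\cdot \partial_{x_\alpha}\tau_p\,\partial_{x_\beta}\tau_p
+ 2 g^{\alpha\beta}\,\partial_{x_\alpha}\tau_p\,\partial_{x_n x_\beta}\tau_p
+ 2\,\partial_{x_n}\tau_p\,\partial_{x_n x_n}\tau_p = 0,
\end{equation*}
evaluated on $\partial M$. Here the mixed derivative $\partial_{x_n x_\beta}\tau_p|_{\partial M} = \partial_{x_\beta}(\partial_{x_n}\tau_p)|_{\partial M}$ is known from step two, and $\partial_{x_n x_n}\tau_p|_{\partial M}$ can be obtained from Lemma \ref{crho} applied to a well-chosen curve, or directly from smoothness of $\rho$ together with the Hessian computation there; the point is that the only genuinely unknown quantity in this relation is $\partial_{x_n} g^{\alpha\beta}$. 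To solve for the full tensor $\partial_{x_n} g^{\alpha\beta}$ (equivalently $\partial_{x_n} g_{\alpha\beta}$) rather than just one contraction of it, I would let $p$ vary over a family of boundary points near $y$: each choice yields a geodesic hitting $y$-neighborhood in a different tangential direction, hence a different vector $(\partial_{x_\alpha}\tau_p)$, and by choosing enough of these (using non-concavity in a neighborhood, and in more than one tangential direction when $n\ge 3$) one gets enough independent linear equations to invert for the symmetric matrix $\partial_{x_n} g_{\alpha\beta}$ pointwise on $\partial M$ near $y$.

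The main obstacle I expect is exactly this last linear-algebra bookkeeping: verifying that as $p$ ranges over an admissible boundary family the covectors $(\partial_{x_\alpha}\tau_p(q))$ span enough directions at a fixed $q\in\partial M$ to determine the $\binom{n-1}{2}+(n-1)$ independent components of $\partial_{x_n}g_{\alpha\beta}$, and controlling that the geodesics realizing $\tau(p,q)$ stay in the interior (non-tangential) so that Lemma \ref{rho} and the Eikonal equation apply. This is where non-concavity is used essentially — it both produces interior-realized distances and, by an open-ness argument, lets us perturb $p$ in all tangential directions. Once the spanning is established, recovering $\partial_{x_n} g_{\alpha\beta}$ is immediate, completing the proof.
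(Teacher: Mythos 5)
Your first two steps are fine: at a non-concave point one can produce boundary pairs whose distance is realized by an interior, transversal minimizing geodesic, the tangential derivatives of $\tau_p$ along $\partial M$ are read off from the localized boundary distance function, and the undifferentiated Eikonal equation plus the sign argument recovers $\partial_{x_n}\tau_p$ on the boundary. The gap is in your third step. After differentiating the Eikonal equation once in $x_n$ and restricting to $\partial M$, the identity contains \emph{two} quantities unknown at the $C^1$ stage: the tensor $\partial_{x_n}g^{\alpha\beta}$ you want, and the second normal derivative $\partial_{x_nx_n}\tau_p(q)$. The latter is not computable from the localized boundary distance function together with the $C^0$ jet; it encodes the normal Hessian of the distance spheres about $p$ and depends on interior data. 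Lemma \ref{crho} cannot supply it: that lemma evaluates the Hessian of $\rho$ only at a diagonal point $(p,p)$, where it returns $2|c'(0)|^2$, i.e.\ the metric itself, and says nothing about second derivatives of $\tau_p$ at $q\neq p$ (where, moreover, it is $\tau$ rather than $\rho$ that enters your identity). Varying $p$ does not help, since each new source point contributes one new equation but also one new unknown $\partial_{x_nx_n}\tau_p(q)$, so the system stays underdetermined. Indeed, in the paper this very differentiated Eikonal identity is used only at the $C^2$ stage, and in the opposite logical direction: there $\partial_{x_n}g^{\alpha\beta}$ is already known and one solves for $\partial_{x_nx_n}\tau$.

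The missing idea is to obtain the first-order jet without ever invoking second normal derivatives of $\tau$. The paper starts from the diagonal identity $2g_{11}(p)=\partial_{x_1x_1}\rho(p,p)$ of Lemma \ref{crho}, with $\partial_{x_1}$ a convex direction, and differentiates it in the normal direction of the diagonal point, giving $2\partial_{x_n}g_{11}=\partial_{x_1x_1}\left(\partial_{x_n}\rho+\partial_{y_n}\rho\right)$ at $(y,y)$. The right-hand side requires only the \emph{first} normal derivatives $\partial_{x_n}\tau(x,y)$ and $\partial_{y_n}\tau(x,y)$ at off-diagonal boundary pairs along the convex geodesic, and these are exactly what your step two already recovers. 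Feeding your steps one and two into this differentiated diagonal identity, and then running your spanning argument over sufficiently many convex directions $v_k$, yields all of $\partial_{x_n}g_{ij}|_y$; feeding them into the differentiated Eikonal equation does not close.
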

We need a definition for the proof of this proposition.

\begin{definition} [Convex direction]
Let $\xi$ be a vector tangent to $\partial M$. We can find a
geodesic $\gamma:(-\varepsilon,+\varepsilon)\rightarrow\partial M$
with $\gamma'(0)=\xi$. (Here $\gamma$ may not be a geodesic in $M$.)
Let $\nabla$ be the covariant derivative in $M$, and $\nu$ the
inward-pointing unit normal at appropriate points in $\partial M$.
We call $\xi$ a \textit{convex direction} if
$\langle\nabla_{\gamma'(0)}\gamma',\nu\rangle>0$.
\end{definition}

Certainly the set of convex directions compose an open subset of
$T(\partial M)$. By definition, $\partial M$ is non-concave at $y$
if and only if there is at least one, and hence a nonempty open set
of convex directions based at $y$.

\begin{proof}[Proof of Proposition \ref{c1jet}]
After possibly changing coordinates, we assume $\partial_{x_1}$ is a
convex direction at $y$ (and in a neighborhood too). Let $c(t)$ be a
curve in $M$ such that $c'(t)=\partial_{x_1}$, which means its
coordinates representation is $(x_1+t,x_2,\ldots,x_n)$. Applying
lemma \ref{crho} we know

\begin{equation}
\label{g11} 2g_{11}(p)=\partial_{x_1x_1}\rho(p,p),
\end{equation}

where $p$ is not assumed to be on the boundary. Clearly both sides
of the equation are smooth functions of $p$. We now let the point
$p$ move in the direction $\partial_{x_n}$ and take the derivative
of equation (\ref{g11}),

\begin{eqnarray}
2\partial_{x_n}g_{11}&=&\partial_{x_n}\partial_{x_1x_1}\rho+\partial_{y_n}\partial_{x_1x_1}\rho \nonumber\\
&=&\partial_{x_1x_1}(\partial_{x_n}\rho+\partial_{y_n}\rho).\label{g11array}
\end{eqnarray}

We let $c:(-\varepsilon,+\varepsilon)\rightarrow\partial M$ be the
curve in $\partial M$ with $c(0)=y$ and $c'\equiv\partial_{x_1}$.
Since $\partial_{x_1}$ is a convex direction, we may assume for any
point $x$ on $c$ which is not the same point as $y$, the distance
between $y$ and $x$ is realized by a geodesic segment whose interior
does not intersect $\partial M$, and the geodesic is transversal to
$\partial M$ at both endpoints. So we know the value of
$(\partial_{x_n}\tau)(x,y)$ from first variation of arclength, and
similarly $(\partial_{y_n}\tau)(x,y)$. The values
$(\partial_{x_n}\rho)(x,y)$ and $(\partial_{y_n}\rho)(x,y)$ are then
easily recovered from localized $\tau|_{\partial M}$.

\par Since
$\partial_{x_1x_1}(\partial_{x_n}\rho+\partial_{y_n}\rho)|_{(y,y)}$
only depends on the value of
$(\partial_{x_n}\rho+\partial_{y_n}\rho)(x,y)$ where $x$ is along
the curve $c$, from equation (\ref{g11array}) we find
$\partial_{x_n}g_{11}|_y$.
\par Now we use the fact that a symmetric $n\times n$ tensor $(f_{ij})$ can
be recovered by knowledge of $f_{ij}v^i_kv^j_k$ for $N=n(n+1)/2$
``generic'' vectors $v_k$, $k=1,\ldots,N$, and we can find such $N$
vectors in any open set on the unit sphere.
\par We may choose appropriate $N$ perturbations of
$\partial_{x_1}$, say $v_k$, which are all convex directions at $y$.
Letting $(\partial_{x_n}g_{ij})$ be the tensor described above, We
find the values of $\partial_{x_n}g_{ij}v^i_kv^j_k$ using the same
method as above (change $\partial_{x_1}$ into $v_k$). They will tell
us the values of $\partial_{x_n}g_{ij}|_y$.
\end{proof}
Next we give the recovery procedure of $C^2$ jet, which applies
Eikonal equation. The cases of higher order jets are essentially the
same as $C^2$ jet.

\begin{proposition}\label{c2jet}
If $\partial M$ is non-concave at $y$, then we can recover the $C^2$
jet near $y\in\partial M$ from the localized boundary distance
function, with respect to a given boundary normal coordinates.
\end{proposition}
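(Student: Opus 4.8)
The plan is to mimic the structure of the proof of Proposition \ref{c1jet}, but now feed the recovered $C^1$ jet into the Eikonal equation (\ref{Eikonal}) to extract the second normal derivatives. First I would recall that by Proposition \ref{c1jet} we already know $g_{ij}|_{\partial M}$ and $\partial_{x_n}g_{ij}|_{\partial M}$ near $y$, hence also $g^{ij}|_{\partial M}$, $\partial_{x_n}g^{ij}|_{\partial M}$, and all tangential derivatives of these. I would fix a convex direction, say $\partial_{x_1}$ after a coordinate change, and run the same differentiation scheme one order higher: starting from $2g_{11}(p)=\partial_{x_1x_1}\rho(p,p)$ (equation (\ref{g11})), differentiate twice in the normal direction (moving $p$ along $\partial_{x_n}$) to obtain an expression of the form $2\partial_{x_nx_n}g_{11}=\partial_{x_1x_1}\bigl(\partial_{x_nx_n}\rho + 2\partial_{x_n}\partial_{y_n}\rho + \partial_{y_ny_n}\rho\bigr)$ evaluated on the diagonal. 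As before, the right-hand side depends only on the restriction of the bracketed quantity to points $(x,y)$ with $x$ running along the convex curve $c$ through $y$, so everything reduces to knowing $\partial_{x_n}\rho$, $\partial_{y_n}\rho$, and the second normal derivatives $\partial_{x_nx_n}\rho$, $\partial_{x_n y_n}\rho$, $\partial_{y_ny_n}\rho$ along $c$.

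The key new input is the Eikonal equation, which supplies exactly these second normal derivatives of $\rho$ in terms of lower-order data. Writing (\ref{Eikonal}) for $\tau_y$ and converting to $\rho = \tau^2$ (so $\partial_{x_i}\rho = 2\tau\,\partial_{x_i}\tau$), the equation $g^{\alpha\beta}\partial_{x_\alpha}\tau\,\partial_{x_\beta}\tau + (\partial_{x_n}\tau)^2 = 1$ lets us solve for $(\partial_{x_n}\tau)^2$, and after one tangential differentiation along $c$ for $\partial_{x_\alpha}\partial_{x_n}\tau$ in terms of known quantities; a further normal differentiation of the Eikonal equation then expresses $\partial_{x_nx_n}\tau$ in terms of $g^{ij}$, $\partial_{x_n}g^{ij}$, the first derivatives of $\tau$, and the mixed and tangential second derivatives of $\tau$ — all of which are either already known or directly computable from the localized $\tau|_{\partial M}$ via first variation of arclength, since along the convex curve $c$ the distance $\tau(x,y)$ is realized by a geodesic meeting $\partial M$ transversally at both ends. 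Converting back to $\rho$, and using the analogous Eikonal identity in the $y$-variable for the $\partial_{y_ny_n}\rho$ term and the compatibility of the two Eikonal equations for the mixed term $\partial_{x_ny_n}\rho$, we recover $\partial_{x_1x_1}(\partial_{x_nx_n}\rho + 2\partial_{x_ny_n}\rho + \partial_{y_ny_n}\rho)|_{(y,y)}$, hence $\partial_{x_nx_n}g_{11}|_y$.

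Finally, as in Proposition \ref{c1jet}, I would repeat this computation for $N = n(n+1)/2$ convex directions $v_k$ obtained as small perturbations of $\partial_{x_1}$, recovering $\partial_{x_nx_n}g_{ij}v_k^iv_k^j$ for each $k$, and then use the fact that a symmetric $2$-tensor is determined by its values on $N$ generic vectors to reconstruct the full tensor $\partial_{x_nx_n}g_{ij}|_y$. Together with the already-known $C^1$ jet this gives the $C^2$ jet near $y$. The main obstacle I anticipate is the bookkeeping in the second step: one must check carefully that every term arising from twice-differentiating the Eikonal equation — in particular the normal derivative of $g^{\alpha\beta}$ and the mixed second derivatives of $\tau$ — is genuinely expressible through the $C^1$ jet and through boundary-distance data along $c$, with no hidden dependence on $\partial_{x_nx_n}g_{ij}$ that would make the argument circular; this is the point where the convexity of the direction (guaranteeing a transversal minimizing geodesic, so that first variation applies) is essential.
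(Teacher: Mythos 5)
Your proposal is correct and follows essentially the same route as the paper: differentiate the identity $2g_{11}(p)=\partial_{x_1x_1}\rho(p,p)$ twice in the normal direction, obtain the second normal derivatives of $\rho$ along the convex curve $c$ by differentiating the Eikonal equation once in $x_n$ (and once in $y_n$ for the mixed term), using transversality to make the coefficient of the unknown top-order term nonzero, and then recover the full tensor $\partial_{x_nx_n}g_{ij}|_y$ from $N=n(n+1)/2$ convex directions. The only detail you skip is the diagonal point $x=y$, where no transversal geodesic exists; the paper notes directly that $(\partial_{x_nx_n}\rho+2\partial_{x_ny_n}\rho+\partial_{y_ny_n}\rho)(y,y)=0$, though continuity along $c$ would also suffice.
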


\begin{proof}
Clearly being non-concave is an open property for points in
$\partial M$, so we have already recovered $C^1$ jet near $y$ by
Proposition \ref{c1jet}. Again we assume without loss of generality
that $\partial_{x_1}$ is a convex direction at $y$.
\par Now look at equation (\ref{g11}) again, and let $p$ move
in the direction $\partial_{x_n}$, but this time we look at the
second derivative:
\begin{eqnarray}
2\partial_{x_nx_n}g_{11}&=&(\partial_{x_n}+\partial_{y_n})^2(\partial_{x_1x_1}\rho)\nonumber\\
&=&\partial_{x_1x_1}(\partial_{x_nx_n}\rho+2\partial_{x_ny_n}\rho+\partial_{y_ny_n}\rho).
\label{g22array}
\end{eqnarray}
Again we let $c$ be a short enough curve in $\partial M$ with
$c(0)=y$ and $c'\equiv\partial_{x_1}$. For the same reason as in the
proof of Theorem \ref{c1jet}, to compute the value of right side of
equation (\ref{g22array}) we only need to know
$(\partial_{x_nx_n}\rho+2\partial_{x_ny_n}\rho+\partial_{y_ny_n}\rho)$
at $(x,y)$ where $x$ lies on $c$.
\par If $x=y$, it is easy to see the value of
$(\partial_{x_nx_n}\rho+2\partial_{x_ny_n}\rho+\partial_{y_ny_n}\rho)$
at $(x,y)$ is $0$.
\par If $x\neq y$, then we look at the Eikonal equation as in (\ref{Eikonal}), in the
following form,
\begin{equation}\label{q1q2}
g^{\alpha\beta}(q_1)(\partial_{x_\alpha}\tau_{y}(x))(\partial_{x_\beta}\tau_{y}(x))+(\partial_{x_n}\tau_{y}(x))^2=1.
\end{equation}
Taking $\partial_{x_n}$ we get (with all terms evaluated at $x$)
\begin{equation}\label{q1q2long}
\partial_{x_n}g^{\alpha\beta}(\partial_{x_\alpha}\tau_{y})(\partial_{x_\beta}\tau_{y})+2g^{\alpha\beta}(\partial_{x_\alpha x_n}\tau_{y})(\partial_{x_\beta}\tau_{y})+2(\partial_{x_n}\tau_{y})(\partial_{x_nx_n}\tau_{y})=0.
\end{equation}
In equation (\ref{q1q2long}), the term
$\partial_{x_n}g^{\alpha\beta}$ we already know because
$(g^{\alpha\beta})$ is the inverse of $(g_{\alpha\beta})$ and we
know $g_{\alpha\beta}$ and $\partial_{x_n}g_{\alpha\beta}$. Also we
know $\partial_{x_\alpha}\tau_{q_2}$ from the localized boundary
distance function. We know $\partial_{x_\alpha x_n}\tau_{y}$ because
from the first variation formula we know $\partial_{x_n}\tau_{y}$ in
a neighborhood of $x$ along the boundary.
\par Therefore, so far the only term in equation (\ref{q1q2long}) we do not
know is $\partial_{x_nx_n}\tau_{y}(x)=\partial_{x_nx_n}\tau(x,y)$,
whose coefficient is $2\partial_{x_n}\tau_{y}(x)$, a nonzero number
because of the transversality of the segment between $x$ and $y$ to
$\partial M$. We can now immediately find value of
$\partial_{x_nx_n}\tau(x,y)$ from the other terms. Then, we can find
$\partial_{x_nx_n}\rho(x,y)$.
\par If we interchange the roles of $x$ and $y$, we can find
$\partial_{y_ny_n}\rho(x,y)$. As for $\partial_{x_ny_n}\rho(x,y)$,
we simply take derivative of equation (\ref{q1q2}) with respect to
$y_n$, that is, let $y$ move away from the boundary, and get
(assuming all are taken at $(x,y)$)
\[
2(\partial_{x_\alpha
y_n}\tau)(\partial_{x_\beta}\tau)+2\partial_{x_ny_n}\tau\partial_{x_n}\tau=0,
\]
where we know all but $\partial_{x_ny_n}\tau(x,y)$. So we can find
the value of $\partial_{x_ny_n}\tau(x,y)$ and hence
$\partial_{x_ny_n}\rho(x,y)$.

\par Up to now, we have computed
$(\partial_{x_nx_n}\rho+2\partial_{x_ny_n}\rho+\partial_{y_ny_n}\rho)$
at $(x,y)$ with $x\in c$, so by equation (\ref{g22array}), we can
find $\partial_{x_nx_n}g_{11}|_y$.
\par Once again, we perturb $\partial_{x_1}$ a little to get
sufficiently many vectors $v_k$ with convex directions. Carry out
the procedure for every $v_k$ to know
$(\partial_{x_nx_n}g_{ij})v^i_kv^j_k$, and combine the values all
together to find out all the $\partial_{x_nx_n}g_{ij}|_y$.
\end{proof}
We may now proceed by induction.
\begin{proposition}\label{ckjet}
Let $k\geq 3$. If we have recovered the $C^{k-1}$ jet in an open
neighborhood of $y\in\partial M$, then with respect to a given
boundary normal coordinates, we can recover the $C^k$ jet of the
same neighborhood from localized boundary distance function.
\end{proposition}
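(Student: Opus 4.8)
The plan is to mimic the proof of Proposition \ref{c2jet} exactly, replacing the second $x_n$-derivative with the $k$-th one. As before, after a change of coordinates assume $\partial_{x_1}$ is a convex direction at $y$, let $p$ move in the direction $\partial_{x_n}$ in equation (\ref{g11}), and take $k$ derivatives to obtain
\begin{equation}\label{gkarray}
2\partial_{x_n}^k g_{11}=(\partial_{x_n}+\partial_{y_n})^k(\partial_{x_1x_1}\rho).
\end{equation}
Expanding the right side by the binomial theorem, everything is a sum of terms $\partial_{x_1x_1}\partial_{x_n}^a\partial_{y_n}^b\rho$ with $a+b=k$, evaluated along the curve $c$ in $\partial M$ through $y$ with $c'\equiv\partial_{x_1}$; so it suffices to recover each $\partial_{x_n}^a\partial_{y_n}^b\tau(x,y)$ (equivalently the corresponding $\rho$-derivative) for $x\in c$, $x\neq y$ (the case $x=y$ again giving $0$). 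By symmetry of $\tau(x,y)$ in its two arguments we may assume $a\geq b$, and in particular $a\geq 1$.

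The key step is again the Eikonal equation (\ref{q1q2}); I would apply the operator $\partial_{x_n}^{a-1}\partial_{y_n}^{b}$ to it and read off the unknown from the resulting identity. Differentiating (\ref{q1q2}) this many times, Leibniz's rule produces one distinguished term, namely $2(\partial_{x_n}\tau_y)\,\partial_{x_n}^{a}\partial_{y_n}^{b}\tau_y$, whose coefficient $2\partial_{x_n}\tau_y(x)$ is nonzero by the transversality of the minimizing geodesic from $x$ to $y$ at $\partial M$ (guaranteed because $\partial_{x_1}$ is a convex direction and $c$ is short). Every other term in the expansion involves either a derivative of the metric coefficients $g^{\alpha\beta}$ of order $\leq k$ in $x_n$ — known from the already-recovered $C^{k-1}$ jet of $g$, together with $\partial_{x_n}^k g_{\alpha\beta}$ which, being a tangential $(\alpha,\beta)$ component, is recovered by the inductive analogue of the $v_k$-perturbation argument applied first to the tangential directions — or a derivative $\partial_{x_n}^{a'}\partial_{y_n}^{b'}\tau_y$ with $a'+b'<k$, or with $a'+b'=k$ but $a'<a$; all of these have strictly fewer $x_n$-derivatives (or the same total but fewer on the $x$-slot) than the distinguished term, so by a double induction — outer on $k$, inner on $a$ for fixed $k$ — they are all already known. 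Hence $\partial_{x_n}^{a}\partial_{y_n}^{b}\tau(x,y)$ is determined, and therefore so is the right-hand side of (\ref{gkarray}), giving $\partial_{x_n}^k g_{11}|_y$. Finally, perturbing $\partial_{x_1}$ to $N=n(n+1)/2$ nearby convex directions $v_k$ and running the same computation recovers $\partial_{x_n}^k g_{ij}v_k^iv_k^j$ for each, and hence the full tensor $\partial_{x_n}^k g_{ij}|_y$; since non-concavity is open, this works in a whole neighborhood of $y$.

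The main obstacle is bookkeeping rather than a new idea: one must check carefully that when the operator $\partial_{x_n}^{a-1}\partial_{y_n}^{b}$ is applied to (\ref{q1q2}), \emph{every} term other than the distinguished one falls into the inductive hypothesis — in particular that no term of the form $\partial_{x_n}^{a}\partial_{y_n}^{b}\tau_y$ appears with a metric factor that is itself of top order in a way that cannot be resolved, and that the tangential top-order derivatives $\partial_{x_n}^k g_{\alpha\beta}$ needed inside $\partial_{x_n}^k g^{\alpha\beta}$ have genuinely been obtained before the normal-normal components. This is handled by ordering the recovery within a fixed $k$ so that tangential components (which do not require the Eikonal equation at order $k$, only Lemma \ref{crho} as in Proposition \ref{c1jet}'s last paragraph) come first, exactly as in the $C^2$ case; no genuinely new difficulty arises beyond verifying this ordering is consistent.
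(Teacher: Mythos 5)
Your overall strategy coincides with the paper's: differentiate equation (\ref{g11}) $k$ times along $\partial_{x_n}+\partial_{y_n}$, reduce to knowing $\partial_{x_n}^{a}\partial_{y_n}^{b}\rho(x,y)$ with $a+b=k$ for $x$ on the curve $c$, handle $x=y$ by a universal computation, isolate the single top-order unknown by applying $\partial_{x_n}^{a-1}\partial_{y_n}^{b}$ to the Eikonal equation (\ref{q1q2}) (using $\partial_{y_n}g^{\alpha\beta}(x)\equiv 0$ and the symmetry of $\tau$ to reduce to $a\geq 1$), divide by the nonzero coefficient $2\partial_{x_n}\tau_y$, and finish with the perturbation of $\partial_{x_1}$ over convex directions. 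That skeleton is correct and is exactly the paper's proof.

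The passage about $\partial_{x_n}^{k}g_{\alpha\beta}$, however, is both based on a miscount and circular as written. Miscount: the operator applied to (\ref{q1q2}) is $\partial_{x_n}^{a-1}\partial_{y_n}^{b}$ with $a+b=k$, and since $g^{\alpha\beta}(x)$ is annihilated by $\partial_{y_n}$, the highest metric derivative that Leibniz's rule can produce is $\partial_{x_n}^{a-1}g^{\alpha\beta}$ with $a-1\leq k-1$ --- squarely inside the $C^{k-1}$ jet assumed known. No order-$k$ derivative of the metric ever enters the differentiated Eikonal equation, so the ``ordering'' you propose is solving a problem that does not exist. Circularity: in boundary normal coordinates $g_{in}=\delta_{in}$, so the tangential components $\partial_{x_n}^{k}g_{\alpha\beta}$ \emph{are} the entire order-$k$ jet; they cannot be ``recovered first using only Lemma \ref{crho} as in Proposition \ref{c1jet}'', because for $k\geq 2$ the right-hand side of your binomial expansion already requires the order-$k$ derivatives of $\rho$ that only the Eikonal step provides. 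Had the worry been real, your remedy would not have saved the proof. Relatedly, your inner induction on $a$ is vacuous: in the expansion of $\partial_{x_n}^{a-1}\partial_{y_n}^{b}\bigl((\partial_{x_n}\tau_y)^2\bigr)$ the two factors have total normal orders summing to $k+1$, so a factor of order $k$ forces the other factor to be $\partial_{x_n}\tau_y$ and the order-$k$ factor to be exactly $\partial_{x_n}^{a}\partial_{y_n}^{b}\tau_y$; every non-distinguished term therefore has all factors of order at most $k-1$ and is covered by the outer induction alone. With that passage deleted and the count corrected, your argument is the paper's.
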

\begin{proof}
We let $p$ in equation (\ref{g11}) move towards the $n$th direction
and take the $k$th derivative, and get the following equation,
\begin{eqnarray}
2\partial_{x_n}^kg_{11}|_p&=&(\partial_{x_n}+\partial_{y_n})^k(\partial_{x_1x_1}\rho)|_{(p,p)}\\
&=&\partial_{x_1x_1}\left(\sum_{i=0}^k{k\choose
i}\partial_{x_n}^i\partial_{y_n}^{k-i}\rho\right)_{(p,p)}.
\label{gkkarray}
\end{eqnarray}
Here we borrow notation from Theorem \ref{c2jet}. The right side of
equation (\ref{gkkarray}) evaluated at $(y,y)$ only depends on
\begin{equation}\label{k-i}
\partial_{x_n}^i\partial_{y_n}^{k-i}\rho(x,y),
\end{equation}
where $x$ lies on the curve $c$, and $i=0,1,\ldots,k$.
\par If $x=y$ all are simple to compute, and the values do not even depend on the manifold.
\par If $x\neq y$, We first solve the problem when $i=k$. We apply the operator
$\partial_{x_n}^{k-1}$ to equation (\ref{q1q2}), recalling the
Eikonal equation holds wherever the gradient is smooth. The
resulting equation has terms involving $g^{\alpha\beta}$,
$\partial_{\alpha}\tau_{y}$, $\partial_{\beta}\tau_{y}$, and
$\tau_{q_2}$, and each of them may carry the operator
$\partial_{x_n}$ at most $(k-1)$ times, except the last term
\[
2(\partial_{x_n}\tau_{y})(\partial_{x_n}^k\tau_{y}),
\]
where $\partial_{x_n}\tau_{y}$ is nonzero at $x$ by transversality.
Since $x\neq y$ (which means $\tau\neq 0$), knowing the derivatives
of $\rho$ up to order $(k-1)$ is equivalent to knowing the
derivatives of $\tau$ up to order $(k-1)$. It is also okay to move
$x$ along the boundary ($i.e.$ take $\partial_{x_\alpha}$) because
all procedures work in some open neighborhoods, with a change of
coordinates if necessary. So by the inductive hypothesis, we know
$g^{\alpha\beta}$, $\partial_{x_\alpha}\tau$,
$\partial_{x_\beta}\tau$, and $\tau$, along with their derivatives
involving $\partial_{x_n}$ up to $(k-1)$ times. Therefore, we can
compute $\partial_{x_n}^k\tau$, and hence $\partial_{x_n}^k\rho$.
Now we have finished the case $i=k$.
\par If $i=0$, we do the same procedure after interchanging
$x$ and $y$.
\par Finally, if $0<i<k$, we have at least one $\partial_{x_n}$ and
one $\partial_{y_n}$ applied to $\rho$ in formula (\ref{k-i}). To
proceed, we can apply $\partial_{x_n}^{i-1}\partial_{y_n}^{k-i}$ to
Eikonal equation $(\ref{q1q2})$. We then use the same method as in
the case $i=k$. Note that $\partial_{y_n}g^{\alpha\beta}(x)\equiv0$,
because $\partial_{y_n}$ does not move point $x$.
\par So far we have found $\partial_{x_n}^kg_{11}|_y$.
\par We perturb $\partial_{x_1}$ a little to get sufficiently many
vectors $v$ with convex directions. Carry out the procedure for
every such $v$ to know $(\partial_{x_n}^kg_{ij})v^iv^j$, and put the
results all together to determine all the
$\partial_{x_n}^kg_{ij}|_y$.
\end{proof}

If we combine the results of Proposition $\ref{c1jet}$, Proposition
$\ref{c2jet}$, and Proposition $\ref{ckjet}$, we have the following
\begin{theorem}\label{main}
Suppose $\partial M$ is non-concave at $y$, and $D\subset\partial
M\times\partial M$ is an open neighborhood of $(y,y)$. Then we can
recover the $C^\infty$ jet of $g$ at $y$ based on the information of
$\tau|_D$.
\end{theorem}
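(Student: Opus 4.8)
The plan is simply to assemble Propositions \ref{c0jet}, \ref{c1jet}, \ref{c2jet}, and \ref{ckjet} into one induction on the order of the jet, the only real bookkeeping being to keep the underlying neighborhood fixed throughout. First I would invoke the openness of the non-concavity condition (noted right after Definition \ref{nonconcave} and already used in the proof of Proposition \ref{c2jet}) to pick an open set $U\subset\partial M$ with $y\in U$ on which $\partial M$ is non-concave at every point. Shrinking $U$ if necessary, I may also arrange that $U\times U\subset D$, so that the hypothesis ``$\tau|_D$ is known'' supplies exactly the localized boundary distance function on $U$ that is the input of all four propositions. (Since a convex direction at one point stays convex nearby, and the proofs of Propositions \ref{c1jet}--\ref{ckjet} only ever use finitely many convex directions at a time, one can cover $U$ by finitely many coordinate patches and apply the propositions patchwise; I would mention this but not belabor it.)

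Next come the base cases. Proposition \ref{c0jet} recovers $g_{ij}|_{\partial M}$, hence the $C^0$ jet on $U$; Proposition \ref{c1jet}, applicable at every point of $U$ because $\partial M$ is non-concave there, recovers $\partial_{x_n}g_{ij}$ on $U$; and Proposition \ref{c2jet}, likewise applicable pointwise on $U$, recovers $\partial_{x_nx_n}g_{ij}$ on $U$. Thus the full $C^2$ jet is known on the open set $U$, not merely at the single point $y$.

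Then I would close the induction with Proposition \ref{ckjet}: for $k\geq 3$, assuming the $C^{k-1}$ jet is already known on $U$ (true for $k=3$ by the previous step), that proposition recovers the $C^k$ jet on the \emph{same} neighborhood $U$ from the localized distance function, which we possess. Iterating, for every $k\geq 0$ the derivatives $\partial_{x_n}^{k}g_{ij}|_{y}$ are determined by $\tau|_D$, and by the discussion of boundary normal coordinates in Section \ref{pre} this is precisely the assertion that the $C^\infty$ jet of $g$ at $y$, with respect to the chosen boundary normal coordinates, is recovered from $\tau|_D$.

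The one point deserving care — and the reason Propositions \ref{c1jet}, \ref{c2jet}, \ref{ckjet} are phrased in terms of an open neighborhood rather than the single point $y$ — is that the inductive step differentiates the already-known jet data in directions tangent to $\partial M$ (this is how the Eikonal-equation manipulation in the proof of Proposition \ref{ckjet} isolates $\partial_{x_n}^{k}\tau$), so it consumes the $C^{k-1}$ jet on a whole neighborhood, not at a point. Hence I expect the main obstacle to be nothing deeper than verifying that the neighborhood on which the jet is known does not shrink to $\{y\}$ across the infinitely many stages; fixing $U$ at the outset and observing that each of the cited propositions returns the jet on the same open set it was handed makes the induction go through with no further difficulty.
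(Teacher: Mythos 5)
Your proposal is correct and is essentially the paper's own argument: the paper states Theorem \ref{main} as an immediate consequence of combining Propositions \ref{c1jet}, \ref{c2jet}, and \ref{ckjet} (together with Proposition \ref{c0jet}), exactly the induction you describe. Your added bookkeeping — fixing a single open set $U$ of non-concave points with $U\times U\subset D$ and noting that Proposition \ref{ckjet} returns the jet on the same neighborhood it consumes, so the domain does not shrink across infinitely many steps — is precisely the point the paper leaves implicit.
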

If we want to weaken the assumption in the theorem, we can try to
detect non-concave points of $\partial M$ by information about
$\tau|_{\partial M}$ only. The contrapositive statement of
Proposition \ref{Convex} is, if in any open neighborhood of $y$ in
$\partial M$, we can find $x_1,x_2$ with
$\tau(x_1,x_2)<\mu(x_1,x_2)$, then $y$ is not in the interior of the
(closed) set of concave points, $i.e.,$ $y$ is in the closure of
non-concave points. But we can recover $C^{\infty}$ jets near
non-concave points, and jets are continuous (because $g$ is
extendable), so we know the jet at $y$.
\begin{theorem}\label{main2}
Suppose $y\in\partial M$. If for every neighborhood $D$ of $(y,y)\in
\partial M\times\partial M$, we have $\tau|_D$ and $\mu|_D$ do not entirely agree,
then we can recover $C^\infty$ jet of $g$ at $y$.
\end{theorem}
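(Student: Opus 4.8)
The plan is to combine Proposition~\ref{Convex}, Theorem~\ref{main}, and a continuity argument for the jet along $\partial M$. First I would record the elementary fact that $\tau\le\mu$ on $\partial M\times\partial M$ (a path lying in $\partial M$ is in particular a path in $M$), so the hypothesis ``$\tau|_D$ and $\mu|_D$ do not entirely agree'' is equivalent to saying that every neighborhood $D$ of $(y,y)$ contains a pair $(x_1,x_2)$ with $\tau(x_1,x_2)<\mu(x_1,x_2)$.

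Next I would run the contrapositive of Proposition~\ref{Convex}. If $\partial M$ were concave on some open neighborhood $U$ of $y$, then Proposition~\ref{Convex} would give an $\varepsilon>0$ such that the set $D=\{(p,q):\mu(y,p)<\varepsilon,\ \mu(y,q)<\varepsilon\}$ — an open neighborhood of $(y,y)$ — satisfies $\tau|_D=\mu|_D$, contradicting the hypothesis. Hence $y$ is not an interior point of the set of concave points. Since the concave set is closed (negative semidefiniteness of the second fundamental form is a closed condition), its complement $N$, the set of non-concave points, is open, and $y\in\overline{N}$ because $(\operatorname{int}(\text{concave}))^{c}=\overline{N}$. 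I would then fix a boundary normal coordinate chart near $y$ and choose a sequence $y_k\in N$ with $y_k\to y$.

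Now I would apply Theorem~\ref{main} at each $y_k$: given a prescribed neighborhood $D$ of $(y,y)$, for $k$ large we have $(y_k,y_k)\in D$, and in fact a whole neighborhood of $(y_k,y_k)$ is contained in $D$, so $\tau|_D$ contains all the localized boundary-distance data needed to reconstruct the full $C^\infty$ jet at $y_k$, i.e.\ every $\partial_{x_n}^m g_{ij}|_{y_k}$ in the fixed chart. Finally, because $(M,g)$ extends smoothly across $\partial M$ (the extension $(\tilde M,\tilde g)$ fixed in Section~\ref{pre}), each function $x\mapsto\partial_{x_n}^m g_{ij}(x)$ is continuous on $\partial M$ in that chart, so $\partial_{x_n}^m g_{ij}|_{y}=\lim_{k}\partial_{x_n}^m g_{ij}|_{y_k}$, which we have just computed. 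This yields the $C^\infty$ jet of $g$ at $y$.

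I do not expect a serious obstacle here: the content is Proposition~\ref{Convex} together with the openness/closedness bookkeeping. The one point to handle with care is the interplay of data and limits — Theorem~\ref{main} recovers the jet at a single non-concave point from distances in an arbitrarily small neighborhood of that point, so it genuinely applies to each $y_k$ using only $\tau|_D$, and one must keep all the jets expressed in one fixed coordinate system along $\partial M$ so that passing to the limit in $k$ makes sense. Both are immediate once $g$ is known to be extendable.
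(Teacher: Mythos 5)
Your proposal is correct and follows essentially the same route as the paper: the contrapositive of Proposition~\ref{Convex} shows $y$ lies in the closure of the (open) set of non-concave points, Theorem~\ref{main} recovers the jet at nearby non-concave points $y_k$, and continuity of the jet (via the smooth extension $\tilde g$) gives the jet at $y$ in the limit. Your added care about fixing one boundary normal chart and checking that the data for each $y_k$ is contained in a prescribed $\tau|_D$ is a useful elaboration of details the paper leaves implicit, but it is not a different argument.
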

This can help us know the interior metric structure if we a priori
assume the manifold, metric, and boundary are analytic. Observe that
the set of non-concave points is open, and we have the following
\begin{theorem}\label{analytic}
Suppose $(M,\partial M,g)$ is analytic. If for any connected
component of $\partial M$, we have a point $y$ satisfying the
hypothesis of Theorem \ref{main}, then we can recover the $C^\infty$
jet of $g$ at all points of $\partial M$.
\end{theorem}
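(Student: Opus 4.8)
The plan is to reduce Theorem \ref{analytic} to Theorem \ref{main} together with the identity theorem for real-analytic functions. First I would fix a connected component $\Sigma$ of $\partial M$ and let $y\in\Sigma$ be the point supplied by the hypothesis, so that $\partial M$ is non-concave at $y$. By Theorem \ref{main} — more precisely by Propositions \ref{c1jet}, \ref{c2jet}, and \ref{ckjet}, each of which recovers the relevant normal derivatives of $g$ not merely at $y$ but on an open neighborhood of $y$ in $\partial M$ (non-concavity being an open condition) — we can compute from $\tau|_D$ the functions
\[
\left.\frac{\partial^k}{\partial x_n^k} g_{ij}\right|_{\partial M}
\]
for all $k\geq 0$ and all indices $i,j$, in boundary normal coordinates, on some nonempty open set $U\subset\Sigma$ containing $y$.

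Next I would argue that this jet data is real-analytic along $\Sigma$. Since $(M,\partial M,g)$ is analytic, the boundary normal coordinates are analytic: the normal exponential map of an analytic metric is analytic, and $\partial M$ is an analytic submanifold, so the coordinate change between a fixed analytic chart on $\partial M$ and the associated boundary normal chart on a collar of $\partial M$ is analytic. Hence each coefficient $\partial_{x_n}^k g_{ij}|_{\partial M}$, read in an analytic coordinate chart on $\partial M$, is a real-analytic function on that chart.

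The identity theorem then finishes the argument. Cover $\Sigma$ by analytic coordinate charts; since $\Sigma$ is connected, any real-analytic function on $\Sigma$ is determined by its restriction to any nonempty open subset. Applying this to each $\partial_{x_n}^k g_{ij}$ and to the open set $U$ on which it has already been recovered, we obtain these functions on all of $\Sigma$, i.e.\ the $C^\infty$ jet of $g$ at every point of $\Sigma$. Running the same argument on each connected component of $\partial M$ — the hypothesis guarantees a suitable non-concave point on every component — yields the $C^\infty$ jet of $g$ at all points of $\partial M$.

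The only genuinely delicate point is the analyticity bookkeeping: one must check that the jet recovered in Section \ref{recovery} is honestly available on an open subset of $\partial M$ (which the propositions there provide, as noted), and that passing from a fixed analytic atlas of $\partial M$ to the associated boundary normal coordinates preserves real-analyticity, so that the identity theorem applies chart by chart and can be chained across overlaps using connectedness. Once this is granted, the identity theorem is a black box. I would also remark, as in Section \ref{pre}, that the statement is really about the jet with respect to any choice of analytic boundary normal coordinates, since the analytic coordinate change on $\partial M$ lets one translate the jet between the different choices.
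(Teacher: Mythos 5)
Your proposal is correct and follows essentially the same route the paper intends: recover the jet on an open neighborhood of the given non-concave point via Theorem \ref{main} (using that non-concavity is an open condition), observe that the jet coefficients are real-analytic along the boundary in analytic boundary normal coordinates, and propagate by the identity theorem over each connected component. The paper leaves these details implicit, so your filling in of the analyticity bookkeeping is a faithful expansion rather than a different argument.
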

This can lead to lens rigidity results in the category of analytic
metrics, with some assumptions such as ``every unit speed geodesic
hits the boundary in finite time'', see \cite{VargoAnalytic}.

\par In Theorem \ref{main2} and \ref{analytic}, the hypothesis is simply ``the localized chordal distance function
at the boundary does not agree with the localized in-boundary
distance function''. One is tempted to remove the words
``localized'', which means we now have the question: for an analytic
Riemannian manifold with boundary, if $\tau$ does not entirely agree
with $\mu$, can we compute the $C^\infty$ jet? The answer is
negative, because of the examples described in the next section.

\section{Examples of different $C^2$ jets}\label{examplec2}

In this section we are going to give an example of two manifolds
which have the same boundary and the same lens data but different
$C^\infty$ jets. The idea of the example is borrowed from
\cite{Rigidity} section 2, and \cite{Conjugacy} section 6. The idea
in \cite{Rigidity} and \cite{Conjugacy} is, if we have a surface of
revolution with two circles as boundary, then in some sense, the
lens data only depends on the measures of the sublevel sets of
radius function along a meridian. We can find distinct smooth
functions $f_1,f_2$ both with domain $[a,b]$, such that they have
the same measure for every sublevel set.

\par Before giving the example, we give the definition of lens data
and lens equivalence.

\begin{definition}\label{lens}
Let $(M,\partial M,g)$ be a Riemannian manifold with boundary, and
let $\partial(SM)$ be the set of unit vectors with base point at
boundary. Define set $\Omega\subset
\partial(SM)\times\partial(SM)\times\mathbb{R}^+$ to be the set of
3-tuples $(\gamma'(0),\gamma'(T),T)$ that satisfies: (1) $\gamma$ is
a unit speed geodesic, (2) $\langle\gamma'(0),\nu\rangle>0$ $i.e.$
$\gamma'(0)$ points inwards, and (3) $T$ is the first moment at
which $\gamma$ hits $\partial M$ again. The description above
depends on the interior structure, so we orthogonally project
$\partial(SM)$ to $\overline{B(\partial M)}$ the closed ball bundle
on $\partial M$. This projection maps $\Omega$ to $\Omega'\subset
B(\partial M)\times\overline{B(\partial M)}\times\mathbb{R}^+$.
\par We define \textit{lens data} to be the information of $\Omega'$ and
$\tau|_{\partial M}$. We say two Riemannian manifolds with boundary
are \textit{lens equivalent} if they have the same boundary and the
same lens data $i.e.$ the same $\Omega'$ and $\tau|_{\partial M}$.

\end{definition}

\par Consider the strip $S$ defined as $\mathbb{R}\times[0,L]$,
with standard coordinates $(x,y)$ where $0\leq y\leq L$. Obviously
$S$ has a natural structure of manifold with boundary. Define a
Riemannian metric $g$ on $S$ as
\[
g_{yy}=1,\ \ \ \ g_{xy}=0,\ \ \ \ g_{xx}=f(y).
\]
Here $f:[0,L]\rightarrow\mathbb{R}$ is a smooth function, such that
$f(0)=f(L)=1$ and $f(y)\geq 1$ for all $y\in(0,L)$. Under certain
circumstances, this manifold can be viewed as the universal cover of
a surface of revolution in $\mathbb{R}^3$.
\par Obviously the curves $\gamma_0(t)=(x_0,t)$ and
$\gamma_L(t)=(x_0,L-t)$ are unit speed geodesics, for any
$x_0\in\mathbb{R}$. So the normal ($i.e.$ ``$n$-th'' in previous
sections) direction is simply the $y$ direction, and the $C^\infty$
jet of $S$ is determined by
\[
\frac{\partial^k f}{\partial y^k},\ k=1,2,\ldots.
\]
If we let index $1$ stand for $x$ and $2$ for $y$, a straightforward
computation of Christoffel symbols shows
\[
\Gamma_{22}^2=\Gamma_{12}^2=\Gamma_{22}^1=\Gamma_{11}^1=0,\ \ \ \
\Gamma_{11}^2=-\frac{1}{2}f'(y),\ \ \ \
\Gamma_{12}^1=\frac{1}{2}\frac{f'(y)}{f(y)}.
\]
\par Let $(x(t),y(t))$ be a geodesic of $S$ parametrized by arclength. Then it will satisfy the second order system
\[
\frac{d^2x_k}{dt^2}+\sum_{i,j}\Gamma_{ij}^k\frac{dx_i}{dt}\frac{dx_j}{dt},\
\ k=1,2,
\]
where $i,j$ ranges over $1,2$, and $x_1=x$, $x_2=y$.
\begin{lemma} Along a geodesic, $\frac{dx}{dt}\cdot f(y)$ is
constant.\end{lemma}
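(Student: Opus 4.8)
The plan is to verify this conservation law directly from the geodesic equations.  Writing the geodesic system out with the Christoffel symbols computed above, the $k=1$ (i.e.\ $x$) equation reads
\[
\frac{d^2x}{dt^2}+2\Gamma_{12}^1\frac{dx}{dt}\frac{dy}{dt}=\frac{d^2x}{dt^2}+\frac{f'(y)}{f(y)}\frac{dx}{dt}\frac{dy}{dt}=0,
\]
using $\Gamma_{11}^1=\Gamma_{22}^1=0$ and $\Gamma_{12}^1=\Gamma_{21}^1=\tfrac12 f'(y)/f(y)$.  So the strategy is simply to show that $\frac{d}{dt}\bigl(f(y(t))\,\frac{dx}{dt}\bigr)$ vanishes on account of this equation.

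First I would compute
\[
\frac{d}{dt}\left(f(y(t))\frac{dx}{dt}\right)=f'(y)\frac{dy}{dt}\frac{dx}{dt}+f(y)\frac{d^2x}{dt^2}.
\]
Then I would substitute $\frac{d^2x}{dt^2}=-\frac{f'(y)}{f(y)}\frac{dx}{dt}\frac{dy}{dt}$ from the $x$-component of the geodesic equation, obtaining
\[
\frac{d}{dt}\left(f(y(t))\frac{dx}{dt}\right)=f'(y)\frac{dy}{dt}\frac{dx}{dt}-f(y)\cdot\frac{f'(y)}{f(y)}\frac{dx}{dt}\frac{dy}{dt}=0.
\]
Hence $f(y(t))\,\frac{dx}{dt}$ is constant along the geodesic, which is the claim.

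There is essentially no obstacle here: the only thing to be slightly careful about is making sure the geodesic equation is used in the form with the $\tfrac12$ absorbed correctly, i.e.\ that the coefficient of $\frac{dx}{dt}\frac{dy}{dt}$ in the $x$-equation is $2\Gamma_{12}^1=f'(y)/f(y)$ rather than $\Gamma_{12}^1$, since the sum $\sum_{i,j}\Gamma_{ij}^k\frac{dx_i}{dt}\frac{dx_j}{dt}$ picks up both the $(i,j)=(1,2)$ and $(i,j)=(2,1)$ terms.  One could alternatively note that this is just the statement that $\partial_x$ is a Killing field for $g$ (the metric coefficients do not depend on $x$), so $g(\gamma',\partial_x)=f(y)\frac{dx}{dt}$ is a constant of motion by the standard Noether/Clairaut argument; but the direct two-line computation above is the most elementary route and is the one I would write down.
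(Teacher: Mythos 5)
Your proof is correct and is essentially identical to the paper's: both differentiate $f(y)\frac{dx}{dt}$ along the geodesic, substitute the $x$-component of the geodesic equation using $\Gamma^1_{12}=\tfrac12 f'(y)/f(y)$ (with the factor of $2$ from the symmetric sum), and observe the cancellation. Nothing further is needed.
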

\begin{proof}
\begin{eqnarray*}
\frac{d}{dt}\left(\frac{dx}{dt}\cdot f(y)\right)&=&\frac{dx}{dt}\cdot\frac{d}{dt}f(y)+\frac{d^2x}{dt^2}\cdot f(y)\\
&=&\frac{dx}{dt}\frac{dy}{dt}f'(y)-f(y)\cdot\sum_{i,j}\Gamma_{ij}^1\frac{dx_i}{dt}\frac{dx_j}{dt}\\
&=&\frac{dx}{dt}\frac{dy}{dt}f'(y)-f(y)\cdot2\Gamma_{12}^1\frac{dx}{dt}\frac{dy}{dt}\\
&=&\frac{dx}{dt}\frac{dy}{dt}f'(y)-f(y)\cdot\frac{f'(y)}{f(y)}\frac{dx}{dt}\frac{dy}{dt}\\
&=&0.
\end{eqnarray*}
\end{proof}

The Lemma above is Clairaut's relation when $S$ is a surface of
revolution. The Lemma does not require that the geodesic is unit
speed, but from now on we assume all geodesics in discussion are of
unit speed.
\par Since $f(y)$ is never 0, we know either $\frac{dx}{dt}$
is constant zero or never changes sign. Since $g_{ij}=\delta_{ij}$
at the boundary, we know each geodesic leaves $S$ at the same angle
as when it enters $S$. Also, in each geodesic,
$\left|\frac{dx}{dt}\right|$ assumes its maximum on the boundary
because $f(y)$ is minimal there. Therefore, since
\[
\left(\frac{dy}{dt}\right)^2+\left(\frac{dx}{dt}\right)^2g_{xx}=1,
\]
we know $\frac{dy}{dt}$ never changes sign in the interior. This
means each entering geodesic transversal to the boundary goes all
the way to the other component of the boundary, and hits the
boundary with the ``same'' direction as it entered.

\par Suppose $(x(t),y(t))$, $t\in[0,T]$ is such a maximal geodesic,
and without loss of generality we assume $\frac{dy}{dt}>0$ which is
equivalent to the geodesic entering $S$ at $y=0$ and leaving $S$ at
$y=L$. Since $\frac{dy}{dt}$ is positive and smooth, we have
\begin{eqnarray*}
T&=&\int_0^L\frac{dt}{dy}dy\\
&=&\int_0^L\left(\frac{dy}{dt}\right)^{-1}dy\\
&=&\int_0^L\left(1-x'(t)^2\cdot f(y)\right)^{-\frac{1}{2}}dy\\
&=&\int_0^L\left(1-\frac{x'(0)^2}{f(y)}\right)^{-\frac{1}{2}}dy,
\end{eqnarray*}
and
\begin{eqnarray*}
x(T)-x(0)&=&\int_0^L\frac{dx}{dy}dy\\
&=&\int_0^L\frac{dx}{dt}\left(\frac{dy}{dt}\right)^{-1}dy\\
&=&\int_0^L\frac{x'(0)}{f(y)}\left(1-\frac{x'(0)^2}{f(y)}\right)^{-\frac{1}{2}}dy,
\end{eqnarray*}

Now let's consider two different strips of this kind, $S_1,S_2$ with
$L=2\pi$ and
\begin{eqnarray*}
f_1(y)&=&2-\cos(y),\\
f_2(y)&=&2-\cos(2y).
\end{eqnarray*}
Consider a geodesic in $S_1$ and one in $S_2$ entering them at the
same location and same direction, $i.e.$, $x_1(0)=x_2(0)$ and
$x_1'(0)=x_2'(0)$. Then obviously $T_1=T_2$ and $x_1(T_1)=x_2(T_2)$,
because for each real number $r$, the sublevel sets $\{f_1\leq r\}$
and $\{f_2\leq r\}$ have the same measure.
\par If we take quotients of $S_1$ and $S_2$, both by $x$-axis slides of multiples of 100,
we have two cylinders with identical lens data but different
$C^\infty$ jets. Furthermore, both are compact and analytic. If we
want the boundary to be connected, we can take the quotients of the
cylinders by an orientation-reversing involution, which gives us two
M\"{o}bius bands.

\begin{theorem}
There is an example of two analytic Riemannian manifolds with
isometric boundaries and identical lens data, but different
$C^\infty$ jets at the boundaries.
\end{theorem}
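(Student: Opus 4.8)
The plan is to take the two analytic strips $S_1,S_2$ constructed above, with $L=2\pi$, $f_1(y)=2-\cos y$ and $f_2(y)=2-\cos 2y$, and pass to compact quotients. On each boundary component $\{y=0\}$ and $\{y=L\}$ we have $g_{xx}=f_i=1$, so both boundaries carry the standard flat metric; quotienting by the translation $x\mapsto x+100$ yields analytic compact cylinders $C_1,C_2$ whose boundaries are two disjoint round circles of circumference $100$, hence isometric. (For a connected boundary one quotients further by the fixed-point-free orientation-reversing involution $(x,y)\mapsto(x+50,\,L-y)$, which is an isometry since $f_i(L-y)=f_i(y)$ for $i=1,2$; this produces two M\"obius bands whose boundary is a single circle of circumference $100$.) Compactness and analyticity are clear.

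For the lens data I would use the geodesic analysis already carried out: a unit-speed geodesic meeting the boundary transversally has $y'(t)$ of constant sign, so it crosses to the opposite component, entering and leaving at the same angle, and its travel time $T$ and $x$-displacement $x(T)-x(0)$ are given by the integrals displayed above, whose integrands are functions of $f(y)$ alone. Hence both quantities depend only on $x'(0)$ and on the distribution function $\lambda_i(r)=\bigl|\{y\in[0,2\pi]:f_i(y)\le r\}\bigr|$. The crux is the identity $\lambda_1\equiv\lambda_2$, which follows from the substitution $u=2y$ together with the $2\pi$-periodicity of $\cos$:
\[
\bigl|\{y\in[0,2\pi]:2-\cos 2y\le r\}\bigr|
=\tfrac12\bigl|\{u\in[0,4\pi]:2-\cos u\le r\}\bigr|
=\bigl|\{u\in[0,2\pi]:2-\cos u\le r\}\bigr|.
\]
Consequently, corresponding geodesics of $S_1$ and $S_2$ (same $x(0)$ and $x'(0)$) have equal $T$ and equal endpoints, hence equal entry and exit points and directions, and this data descends to the quotients, so $C_1$ and $C_2$ (and the two M\"obius bands) have the same $\Omega'$.

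It then remains to check $\tau|_{\partial M}$ and the jets. Boundary points on the same component are joined by the boundary geodesic, and nothing shorter exists because a transversal geodesic never returns to the component it left (as $y'$ keeps its sign) and the tangential geodesic is the boundary itself ($f_i'(0)=0$); since the boundary circles are isometric, these distances agree. For points on opposite components the distance is the infimum of $T$ over crossing geodesics (possibly wrapping in $x$ or reflecting several times) with the prescribed $x$-displacement, and since $S_1$ and $S_2$ realize exactly the same set of pairs $(x(T)-x(0),\,T)$ by the identity above, these infima coincide. Finally, in boundary normal coordinates $y$ is the normal variable and the $C^\infty$ jet is encoded by $\partial_y^k g_{xx}|_{y=0}=f_i^{(k)}(0)$; here $f_1''(0)=1\neq 4=f_2''(0)$, so the $C^2$ jets, and a fortiori the $C^\infty$ jets, differ (while $f_1'(0)=f_2'(0)=0$, so the $C^1$ jets still agree — the $C^1$ version is deferred to Section~\ref{examplec1}). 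I expect the main obstacle to be precisely this last paragraph: verifying that the crossing geodesics are genuinely minimizing and that, after passing to the compact quotients, no wrap-around or multiply-reflected path yields a shorter connection or breaks the match — that is, that the full boundary distance function, not merely the scattering relation $\Omega'$, is faithfully controlled by the sublevel-measure identity.
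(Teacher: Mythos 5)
Your proposal is correct and follows essentially the same route as the paper: the strips with $f_1=2-\cos y$, $f_2=2-\cos 2y$, the Clairaut-type analysis reducing $T$ and $x(T)-x(0)$ to integrals of functions of $f(y)$ alone, the equality of sublevel-set measures, and the quotient to cylinders or M\"obius bands. You in fact treat the boundary distance function $\tau|_{\partial M}$ more explicitly than the paper does (which is a genuine, if minor, improvement in rigor), and your identification of $f_1''(0)=1\neq 4=f_2''(0)$ as the source of the differing jets matches the paper's concluding remark.
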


The examples $S_1$ and $S_2$ have same lens data, same $C^1$ jet,
but different $C^2$ jet. If one wants a pair of examples of
different $C^1$ jets, then the idea still works, but to construct an
example we need to care about the smoothness at the peaks and the
smooth extendability at boundary, as in the following section.

\section{Examples of different $C^1$ jets}\label{examplec1}

In this section we give an example of two manifolds which have the
same boundary and the same lens data but different $C^1$ jets.
Knowledge of the $C^1$ jet is equivalent to knowledge of the second
fundamental form of the boundary as a submanifold, so different
$C^1$ jet means different ``shape'' of the embedding.
\par The setup is the same as
in the previous section. The only modifications are the functions
$f_1$ and $f_2$. Let $L=14$.
\par Let $f_1:[0,14]\rightarrow\mathbb{R}$ be a smooth function that satisfies the following properties:
\begin{eqnarray*}
&&f_1(x)=x+1,\ \text{if }0\leq x\leq 1;\\
&&f_1'(x)>0,\ \text{if }x\in[1,3),\ \text{and }f_1'(3)=0;\\
&&f_1(3+t)=f_1(3-t)\ \text{if }t\in[0,1];\\
&&f_1'(x)<0,\ \text{if }x\in(3,6);\\
&&f_1(x)=1,\ \text{if }6\leq x\leq 7;\\
&&f_1(7+t)=f_1(7-t)\ \text{if }t\in[0,7].
\end{eqnarray*}
We have some freedom of choice here, but it is crucial and possible
to make $f_1$ smooth. Intuitively, $f_1$ starts at $1$ and increases
linearly in the first time period, near the peak $f_1$ is symmetric,
then it smoothly decreases to the constant function $1$, and later
it copies its own mirror image.
\par In order to define $f_2$, we define the non-increasing function $H:[1,+\infty)\rightarrow[0,6]$,
\[
H(y)=m(\{x\in[0,6]\ |\ f_1(x)\geq y\}),
\]
where $m(\cdot)$ is the Lebesgue measure.
\par We think of $f_2$ as the ``horizontal central lineup'' of $f_1$. That is, $f_2:[0,14]\rightarrow L$ should satisfy the following:
\begin{eqnarray*}
&&\text{if }x\in[0,3),\ \text{then } f_2(x)=y\ \text{if and only if }x=3-\frac{H(y)}{2};\\
&&f_2(3)=f_1(3);\\
&&f_2(3+t)=f_2(3-t),\ \text{if }t\in[0,3];\\
&&f_2(x)=1,\ \text{if }x\in[6,7];\\
&&f_2(7+t)=f_2(7-t)\ \text{if }t\in[0,7].
\end{eqnarray*}
Obviously $f_2$ is uniquely determined by $f_1$, and they have the
same measure for every sublevel set. The only possibilities of
non-smoothness of $f_2$ are at $0,3,6,8,11,14$. We have $f_2=f_1$
near $3$ and $11$ from the symmetry of $f_1$ near the peaks. It is
not hard to see $f_2$ is smooth at $6$, because the graph of $f_2$
near $(6,1)$ is a linear transformation of that of $f_1$ near the
same point. The smoothness near $8$ is guaranteed for the same
reason. Finally, from the symmetry of $f_2$, the smooth
extendability of $f_2$ at $0$ and $14$ immediately follows.

\par Observe that $f_1'(0)=1=-f_1'(14)$ but $f_2'(0)=f_2'(14)=0$, $i.e.$, the boundary is concave in $S_1$ but totally geodesic in $S_2$.
We now use the same argument as in last section. The strips
$S_1,S_2$ defined by $f_1,f_2$ are lens equivalent, but have
different $C^1$ jets. If we want, we can take quotients to make the
strips compact, and to make the boundaries connected, as in the last
section.

\begin{theorem}
There is an example of two Riemannian manifolds with isometric
boundaries and identical lens data, but different $C^1$ jets at the
boundaries.
\end{theorem}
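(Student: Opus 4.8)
The plan is to verify in detail the construction outlined above; four things have to be checked: that $f_2$ is a genuine $C^\infty$ function, that the strips $S_1,S_2$ are lens equivalent, that their boundaries are isometric, and that their $C^1$ jets differ.

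\emph{Smoothness of $f_2$.} I would first note that on $[0,3]$ each superlevel set $\{x\in[0,6]:f_1(x)\ge y\}$ is an honest interval, because $f_1$ is strictly increasing on $[0,3]$ and strictly decreasing on $[3,6]$; hence $H$ is a strictly decreasing bijection of $[1,f_1(3)]$ onto $[0,6]$, the relation $f_2(x)=y\iff x=3-H(y)/2$ defines $f_2$ unambiguously on $[0,3)$, and the reflection and mirroring prescriptions extend it to all of $[0,14]$ as a function with the same distribution as $f_1$. The only candidate points of non-smoothness are $0,3,6,8,11,14$; near $3$ and $11$ one checks $f_2=f_1$ from the local symmetry of $f_1$ about its peaks, near $6$ and $8$ the graph of $f_2$ differs from that of $f_1$ by an affine change of the $(x,y)$-plane, and near $0$ and $14$ smoothness, together with the one-sided smooth extendability needed to extend $\tilde g$ across $\partial M$, follows from the symmetry of $f_2$ about $7$. \emph{I expect this verification to be the main obstacle:} the rearranged profile $f_2$ inherits the behaviour of $f_1$ at its critical levels in a somewhat delicate way, and one must be careful about how $f_1$ approaches its minimum value and about the order of vanishing of $f_1'$ so that $f_2$ is genuinely smooth rather than merely continuous.

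\emph{Lens equivalence.} Here I would reuse the two explicit integrals from the previous section. Along a unit-speed geodesic entering transversally at $y=0$ one has $c:=f(y)\,\tfrac{dx}{dt}$ constant with $|c|<1$, and $\bigl(\tfrac{dy}{dt}\bigr)^2=1-c^2/f(y)\ge 1-c^2>0$ since $f\ge 1$; thus $\tfrac{dy}{dt}$ never vanishes, the geodesic runs monotonically to $y=L$, and its first return time and horizontal displacement are $T(c)=\int_0^L\bigl(1-c^2/f(y)\bigr)^{-1/2}\,dy$ and $X(c)=\int_0^L \tfrac{c}{f(y)}\bigl(1-c^2/f(y)\bigr)^{-1/2}\,dy$. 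Both depend on $f$ only through integrals of the form $\int_0^L\phi(f(y))\,dy$, so equimeasurability of $f_1$ and $f_2$ forces the maps $c\mapsto\bigl(T(c),X(c)\bigr)$ to agree; since a geodesic enters and leaves the strip at equal angles and the orthogonal projection to the ball bundle records precisely $c$ (and the base points), the sets $\Omega'$ coincide. The boundary distance between points on opposite components is determined by these same integrals (choose $c$ realizing the prescribed displacement and take the minimal length), distances between points on one component are realized along the boundary and equal the Euclidean distance in both metrics because $f\ge 1$, and geodesics tangent to the boundary are excluded from $\Omega$ by the strict-transversality requirement in Definition \ref{lens}. Hence $S_1$ and $S_2$ have the same $\Omega'$ and the same $\tau|_{\partial M}$.

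\emph{Boundary data and compactification.} Since $f_1(0)=f_2(0)=f_1(L)=f_2(L)=1$, both metrics restrict to $\delta_{ij}$ on $\partial M$, so the $C^0$ jets agree and the boundaries are isometric. On the other hand the only nonzero normal derivative of the metric at the boundary is $\partial_y g_{xx}\big|_{\partial M}=f'(0)$ (resp. $f'(L)$), and $f_1'(0)\ne f_2'(0)$; therefore the second fundamental forms — equivalently the $C^1$ jets — differ, which is the assertion of the theorem for the strips. Finally, to obtain compact examples with connected boundary I would quotient each strip by a common lattice $p\mathbb{Z}$ of $x$-translations, which acts isometrically on both and yields lens-equivalent cylinders, and then by the fixed-point-free orientation-reversing involution $(x,y)\mapsto(x+p/2,\,L-y)$; this is an isometry of both $S_i$ precisely because each $f_i$ is symmetric about $L/2$, and the resulting Möbius bands inherit the equal lens data (geodesics lift to the covers) and the unequal $C^1$ jets (a local invariant of the boundary) from the strips.
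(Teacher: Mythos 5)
Your proposal is correct and follows essentially the same route as the paper: the equimeasurable rearrangement $f_2$ of $f_1$, smoothness checked via the local symmetries and the affine-image-of-the-graph argument at the critical levels, lens equivalence via the Clairaut integrals depending only on the distribution of $f$, and quotients to compact M\"obius bands. The one quibble is that smooth extendability at $x=0$ comes from the symmetry of $f_2$ about $x=3$ (reducing it to smoothness at the interior point $x=6$), not from the symmetry about $x=7$, which merely transfers the case $x=14$ to $x=0$; otherwise your verification is, if anything, more careful than the paper's (e.g.\ on $\tau|_{\partial M}$ and on tangent geodesics).
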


\end{document}